\theoremstyle{plain}                    
\newtheorem{theorem}{Theorem}[section]
\newtheorem{lemma}[theorem]{Lemma}
\newtheorem{proposition}[theorem]{Proposition}
\newtheorem{corollary}[theorem]{Corollary}
\theoremstyle{definition}
\newtheorem{choice}[theorem]{Choice of constants}
\newtheorem{assumption}[theorem]{Assumption}
\newtheorem{definition}[theorem]{Definition}
\newtheorem{remark}[theorem]{Remark}
\newtheorem{question}[theorem]{Question}
\numberwithin{equation}{section}
\newcommand{\zz}{\mathbb Z}
\newcommand{\rr}{\mathbb R}
\newcommand{\hh}{\mathbb H}
\newcommand{\ff}{\mathbb F}
\newcommand{\ee}{\mathbb E}
\newcommand{\sphere}{\mathbb S}
\newcommand{\piorb}[1]{\pi_1^{\operatorname{orb}}\left(#1\right)}
\newcommand{\sys}[1]{\operatorname{sys}\left(#1\right)}
\newcommand{\cox}[1]{\operatorname{w}(#1)} %coxeter word
\newcommand{\wlength}[1]{|#1|_T} %word length
\newcommand{\lk}[1]{\operatorname{lk}\left(#1\right)}
\newcommand{\rightQ}[2]{\left.\raisebox{.2em}{$#1$}\middle/\raisebox{-.2em}{$#2$}\right.}
\newcommand{\leftQ}[2]{\left.\raisebox{-.2em}{$#2$}\middle\backslash\raisebox{.2em}{$#1$}\right.}
\newcommand{\from}{\colon\thinspace}
\newcommand{\CAT}[1]{\textrm{CAT}(#1)}
\newcommand{\llangle}{\langle\negthinspace\langle}
\newcommand{\rrangle}{\rangle\negthinspace\rangle}
\newcommand{\bw}[2]{\operatorname{BW}_{#1}(#2)}
\newcommand{\injrad}{\operatorname{injrad}}
\newcommand{\acts}{\curvearrowright}
\newcommand{\Stab}[1]{\operatorname{Stab}(#1)}
\newcommand{\coneoff}[1]{\widehat{#1}}
\begin{document}

\title{Incubulable hyperbolic $3$-pseudomanifold groups}

\author{Jason Manning}
\address{Department of Mathematics, 310 Malott Hall, Cornell University, Ithaca, NY 14853}
\email{jfmanning@cornell.edu}

\author{Lorenzo Ruffoni}
\address{Department of Mathematics and Statistics, Binghamton University, Binghamton, NY 13902, USA}
\email{lorenzo.ruffoni2@gmail.com}

% \date{\today}
\date{February 2026}
\subjclass[2020]{20F67, 57K32, 22D55, 20F55}
\keywords{hyperbolic groups, pseudomanifolds, property (T), cube complexes, reflective orbifolds}

\begin{abstract}
We construct compact hyperbolic $3$-manifolds with totally geodesic boundary, such that the closed $3$-pseudomanifolds obtained by coning off the boundary components are negatively curved and contain locally convex subspaces whose fundamental groups have property (T).
In particular, the fundamental groups of these $3$-pseudomanifolds are word hyperbolic but not cubulable.  We deduce that in any relative cubulation of one of these hyperbolic $3$-manifold groups some hyperplane stabilizer has infinite intersection with the fundamental group of some boundary component.
\end{abstract}

\maketitle
% \tableofcontents
%%%%%%% prints list of todos	
% \makeatletter
% \providecommand\@dotsep{5}
% \makeatother
% \listoftodos\relax

\section{Introduction}
One of the most important open problems in geometric group theory is the Cannon Conjecture, which asserts that any word hyperbolic group with $2$-sphere boundary is virtually cocompact Kleinian.  By work of Bergeron--Wise in one direction, and Markovi\'c and Ha\"issinsky in the other, this conjecture is equivalent to a positive answer to the following.
\begin{question}\label{q:cannon}
  Is every word hyperbolic group with $2$-sphere boundary cubulable?
\end{question}
To say a group is \emph{cubulable} is to say it admits a proper and cocompact action by isometries on some $\CAT{0}$ cube complex.  
Every cocompact Kleinian group is cubulable by \cite[Theorem 1.5]{BW12}.  And a cubulable word hyperbolic group with $2$-sphere boundary is virtually cocompact Kleinian by~\cite[Theorem 1.10]{Ha15} (generalizing~\cite{Mar13}).

We do not directly speak to Question~\ref{q:cannon}, but we study the corresponding question for groups with \emph{Pontryagin sphere} boundary.  The Pontryagin sphere is a certain inverse limit of surfaces which appears as the boundary at infinity of many $\CAT{0}$ $3$-dimensional complexes (universal covers of $3$-\emph{pseudomanifolds}, see below). 
It is like a sphere in that it is a connected $2$-dimensional compactum whose $2$-dimensional \v{C}ech cohomology is cyclic.  However its $1$-dimensional cohomology is infinitely generated. We refer the reader to \cite{JA91,DR99,FI03,SW20}.
\begin{question}\label{q:pcannon}
  Is every word hyperbolic group with Pontryagin sphere boundary cubulable?
\end{question}
Classical examples of word hyperbolic groups with Pontryagin sphere boundary occur for instance as finite index subgroups of a right-angled Coxeter group (RACG) whose nerve is a flag-no-square triangulation of a connected closed orientable surface, and via Charney--Davis strict hyperbolization \cite{DA08,CD95}. 
Both sources of examples yield groups that are cubulable (see \cite{DA08,LR24}). 
Nonetheless, we answer Question~\ref{q:pcannon} in the negative; see \eqref{item:notcubulable} in Corollary~\ref{cor:main}.

Our examples, as well as the aforementioned classical examples, are fundamental groups of negatively curved pseudomanifolds.
A \textit{$3$-pseudomanifold} is a $3$-dimensional polyhedral complex $P$ in which the links of vertices are connected closed orientable surfaces of arbitrary genus.
These arise by taking a compact $3$-manifold $M$ with orientable boundary and coning off the boundary components (one cone point for each component) to obtain a compact space $\coneoff M$; see \S\ref{sec:coneoff} for details.

A \emph{negatively curved} $3$-pseudomanifold is one which admits a complete locally $\CAT{-1}$ metric. 
Such a pseudomanifold is aspherical.  If it is also compact, then the fundamental group is word hyperbolic.  Our definition does not permit pseudomanifolds with boundary, so for us a pseudomanifold will be \emph{closed} if and only if it is compact.  A closed pseudomanifold has a fundamental class over $\zz/2\zz$, and so the fundamental group of a closed aspherical $3$-pseudomanifold is $3$-dimensional.

Our main result is the following.

\begin{theorem}\label{thm:main}
    There exists a sequence of compact hyperbolic $3$-manifolds $M_n$ with totally geodesic boundary such that the boundary cone-offs $\coneoff M_n$ satisfy the following:
\begin{enumerate}
    \item \label{item:negative} $\coneoff M_n$ is a closed negatively curved $3$-pseudomanifold.
    \item \label{item:T} $\pi_1(\coneoff M_n)$ contains an infinite quasiconvex subgroup with property (T).
    \item \label{item:char} The genus of the components of $\partial M_n$ and the Euler characteristic of $\coneoff M_n$ both go to infinity with $n$.
\end{enumerate}
\end{theorem}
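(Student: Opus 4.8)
The plan is to build a single example carrying all the required features and then let $M_n$ range over finite covers of the underlying manifold. I would split the argument into a geometric part — producing a hyperbolic $3$-manifold $M$ with totally geodesic boundary and showing that $\coneoff M$ is negatively curved — and a group-theoretic part — locating the property (T) subgroup.

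For the geometric part I would not start from an arbitrary hyperbolic $3$-manifold, since one needs tight control over a subspace; instead I would assemble $M$ from hyperbolic building blocks. Concretely, construct a compact hyperbolic reflection $3$-orbifold $\mathcal{O}$ with totally geodesic boundary — by gluing right-angled or Coxeter hyperbolic polyhedra, or by an arithmetic recipe — whose mirror and branch pattern is fixed in advance, and pass to a torsion-free finite-index subgroup of $\piorb{\mathcal{O}}$ to obtain the manifold $M$. Then $\coneoff M$ is a closed $3$-pseudomanifold by construction; by the analysis of \S\ref{sec:coneoff} it carries a locally $\CAT{-1}$ metric once the link condition holds at each cone point, and the link over a boundary component $\Sigma$ is a spherical cone metric on $\Sigma$ whose only nontrivial requirement is a lower bound on $\sys{\Sigma}$. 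Since $\pi_1(M)$ is residually finite (indeed LERF), after replacing $M$ by a suitable finite cover we may assume every boundary systole is as large as we wish, so the requirement is met and \eqref{item:negative} holds. The same covers make the genus of every component of $\partial M$ grow, and a short Euler-characteristic computation expresses $\chi(\coneoff M)$ in terms of the boundary genera, so it grows as well; this is \eqref{item:char}.

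The heart of the theorem is \eqref{item:T}. The idea is to engineer $\mathcal{O}$ so that $\coneoff M$ contains a locally convex $3$-dimensional sub-pseudomanifold-with-boundary $Z$ — assembled from a controlled family of cones over boundary surfaces of $M$ joined through pieces of $M$ — which is a thickening of a compact negatively curved polyhedral $2$-complex $Y$, in the sense that $\pi_1(Z)\cong\pi_1(Y)$, chosen so that $\pi_1(Y)$ is infinite with property (T). For $Y$ I would take a compact quotient of a sufficiently thick hyperbolic right-angled building, so that $\pi_1(Y)$ is a uniform lattice with property (T) by Dymara--Januszkiewicz, after \.{Z}uk and Bourdon; equivalently, build $Y$ with expander vertex links of normalized Laplacian gap above $1/2$ and invoke \.{Z}uk's local spectral criterion. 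Since $Z$ is locally convex in $\coneoff M$ and $\coneoff M$ is locally $\CAT{-1}$, each lift of $Z$ to the $\CAT{-1}$ universal cover $\widetilde{\coneoff M}$ is convex, so $\pi_1(Y)\cong\pi_1(Z)\hookrightarrow\pi_1(\coneoff M)$ is injective with quasiconvex image; thus $\pi_1(\coneoff M)$ contains an infinite quasiconvex property (T) subgroup, which is \eqref{item:T}. All of \eqref{item:negative}--\eqref{item:char} pass to the finite covers $M_n$ of $M$, with the systole and genus bounds only improving, which supplies the required sequence. (For the intended corollary, it is exactly \eqref{item:T} that forces $\pi_1(\coneoff M_n)$ to be non-cubulable, since a quasiconvex subgroup of a cubulable hyperbolic group is again cubulable while an infinite property (T) group is not.)

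The main obstacle is the realization hidden in the previous paragraph: one must exhibit a genuine hyperbolic $3$-manifold $M$ with totally geodesic boundary whose boundary cone-off contains the prescribed sub-pseudomanifold $Z$ locally convexly. This pins down the combinatorics of the reflection orbifold $\mathcal{O}$ — its mirror pattern, its dihedral angles, and the placement of the to-be-coned boundary surfaces relative to the branch locus — which must be arranged so as to simultaneously (i) admit a hyperbolic structure, (ii) reproduce the intended vertex links of $Y$ inside $Z$, and (iii) keep $Z$ convex after the cone-off. Producing one consistent such $\mathcal{O}$, and pinning down the property (T) complex it accommodates, is where essentially all the work lies; by comparison, the curvature computation for the cone-off and the passage to covers are routine.
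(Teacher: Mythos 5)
Your outline reproduces the paper's architecture at a high level (reflection orbifold assembled from hyperbolic polyhedra, a locally convex subspace whose coned-off fundamental group has property (T) by \.{Z}uk's spectral criterion, and the cone-off machinery of \S\ref{sec:coneoff}), but as written it is a plan rather than a proof: the step you yourself identify as ``where essentially all the work lies'' --- exhibiting one consistent orbifold $\mathcal O$ whose cone-off contains the prescribed $Z$ locally convexly, with the right vertex links and with the buffer-width condition $\bw{\partial M}{Z\cap\partial M}$ large enough for Theorem~\ref{thm:KM2} --- is precisely the content of \S\ref{sec:turnover groups}--\S\ref{sec:covers} (the turnover complexes $T_n$ with expander links $\Lambda_n$, the thickening to the mirrored handlebody, Andreev's theorem for the prism $B$, and the Coxeter-word estimates feeding into Proposition~\ref{prop:manifold}). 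Deferring that step means the theorem is not actually proved.

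Beyond the deferral, there is a concrete error in how you produce the sequence: you fix one $M$ and ``let $M_n$ range over finite covers of the underlying manifold,'' asserting that \eqref{item:negative}--\eqref{item:char} pass to covers. Item \eqref{item:T} does not. If $M'\to M$ is a finite cover, $\coneoff{M'}$ is \emph{not} a covering space of $\coneoff{M}$ (the cones are attached to different surfaces), so $\pi_1(\coneoff{M'})$ is not a subgroup of $\pi_1(\coneoff{M})$ and there is no induced copy of $\pi_1(Z)$ in it; what you get instead is the cone-off of the preimage of $Z$, whose vertex links are covers of the original expanders and generically lose the spectral gap, so \.{Z}uk's criterion fails. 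Indeed the paper's first remark after Corollary~\ref{cor:main} points out that, by Wise's Malnormal Special Quotient Theorem, every $M_n$ has a finite cover whose boundary cone-off \emph{is} cubulable, hence contains no infinite property (T) subgroup at all. This is why the paper does not take covers of a fixed manifold: the index $n$ enters through the choice of seed complex $T_n$ (equivalently the expander $\Lambda_n$), each $M_n$ is built afresh over $H_n$ so that $H_n$ lifts homeomorphically (Proposition~\ref{prop:manifold}\eqref{item:lift}), and the genus and Euler characteristic growth in \eqref{item:char} comes from $\operatorname{girth}(\Lambda_n)\to\infty$ forcing $\sys{\partial M_n}\to\infty$, not from deepening a tower of covers.
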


The existence of an infinite subgroup with property (T) is a well-known obstruction to the existence of nice geometric actions, and makes these groups quite different from $3$-manifold groups.
More precisely, we have the following.
(See \cite{BHV08} for the definitions of the Haagerup and (T) properties, and \cite{AG08} for the definition of RFRS.)

\begin{corollary}\label{cor:main}
    In the same setting as Theorem~\ref{thm:main}, we have the following.
\begin{enumerate}
    \item  \label{item:notcubulable} $\pi_1(\coneoff M_n)$  does not act properly by cubical isometries on a  $\CAT 0$ cube complex. In particular, it is not cubulable.

    \item \label{item:notHaagerup} $\pi_1(\coneoff M_n)$ does not have the Haagerup property. In particular, it does not act properly by isometries on a real or complex hyperbolic space.

    \item \label{item:notRFRS} $\pi_1(\coneoff M_n)$ is not virtually RFRS.
\end{enumerate}
\end{corollary}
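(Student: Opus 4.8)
The plan is to derive all three items from part~\eqref{item:T} of Theorem~\ref{thm:main} alone; quasiconvexity of the subgroup will play no role. Set $G := \pi_1(\coneoff M_n)$ and fix an infinite subgroup $H \leq G$ with property (T). The mechanism is uniform: each of the three properties under discussion is inherited by subgroups, yet none of them is compatible with being an infinite group with property (T); so in each case it suffices to reach a contradiction at the level of $H$.

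For item~\eqref{item:notcubulable}: by the Niblo--Reeves fixed-point theorem, a group with property (T) acting by cubical automorphisms on a $\CAT 0$ cube complex has bounded orbits and hence fixes a point, which lies in the interior of a unique minimal cube that is then stabilized setwise by the whole group. If $G$ acted properly by cubical isometries on a $\CAT 0$ cube complex $X$, restriction would give such an action of $H$, placing the infinite group $H$ inside the stabilizer of a cube of $X$ and contradicting properness; hence $G$ admits no such action, and in particular is not cubulable. For item~\eqref{item:notHaagerup}: the Haagerup property is inherited by subgroups, and an infinite discrete group cannot have both property (T) and the Haagerup property (a proper affine isometric action on a Hilbert space cannot have a global fixed point unless the group is finite), so $G$ does not have the Haagerup property; since real and complex hyperbolic spaces carry a proper, isometry-equivariant, conditionally negative definite kernel (see~\cite{BHV08}), any group acting properly by isometries on such a space would have the Haagerup property, which is excluded. (The two items are linked: a proper cubical action also yields a proper action on the associated space with measured walls, hence the Haagerup property.)

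For item~\eqref{item:notRFRS}: first I would check that the RFRS condition (see~\cite{AG08}) descends to subgroups. Given a chain $\Gamma = \Gamma_0 \geq \Gamma_1 \geq \cdots$ of finite-index normal subgroups of a RFRS group $\Gamma$ with $\bigcap_i \Gamma_i = 1$ and $\Gamma_{i+1} \supseteq \ker\bigl(\Gamma_i \to H_1(\Gamma_i;\qq)\bigr)$, and a subgroup $K \leq \Gamma$, the chain $(K \cap \Gamma_i)_i$ witnesses RFRS for $K$: it consists of finite-index normal subgroups of $K$ with trivial intersection, and $\ker\bigl(K\cap\Gamma_i \to H_1(K\cap\Gamma_i;\qq)\bigr)$ maps into $\ker\bigl(K\cap\Gamma_i \to H_1(\Gamma_i;\qq)\bigr) \subseteq \Gamma_{i+1}$, hence lies in $K \cap \Gamma_{i+1}$. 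Consequently ``virtually RFRS'' also descends to subgroups. Next, an infinite RFRS group has a finite-index subgroup of positive first Betti number: if every $\Gamma_i$ in a witnessing chain had $b_1(\Gamma_i) = 0$, the defining inclusion would force $\Gamma_{i+1} = \Gamma_i$ at each stage, so $\bigcap_i \Gamma_i = \Gamma \neq 1$. But property (T) passes to finite-index subgroups and forces finite abelianization, so the infinite property-(T) group $H$ is not virtually RFRS, and therefore neither is $G$.

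I do not expect any serious obstacle: the mathematical substance of the paper is Theorem~\ref{thm:main}, and this corollary is a soft consequence. The only points calling for a little care are the passage from bounded orbits to an honest invariant cube in a possibly infinite-dimensional $\CAT 0$ cube complex (handled by the minimal-cube observation above, or by completeness of $\CAT 0$ cube complexes) and the verification that the RFRS condition survives intersection with a subgroup after rationalizing $H_1$, which is exactly the inclusion of kernels just used.
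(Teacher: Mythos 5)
Your proposal is correct and takes essentially the same route as the paper's proof: all three items are deduced from the infinite property~(T) subgroup via the fixed-point theorem for cubical actions on possibly infinite-dimensional $\CAT 0$ cube complexes (the paper handles the infinite-dimensional subtlety you flag by citing Cornulier), the incompatibility of (T) with the Haagerup property and with fixed-point-free isometric actions on real or complex hyperbolic space, and the fact that an infinite (virtually) RFRS group virtually surjects onto $\zz$ while finite-index subgroups of (T) groups have finite abelianization. The extra details you supply (heredity of RFRS under passing to subgroups, the minimal-cube observation) are points the paper disposes of by citation, so the difference is only one of explicitness.
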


\begin{remark}[Virtual cubulability]
The non-cubulability of $\pi_1(\coneoff M_n)$ does not automatically imply the non-cubulability of fundamental groups of $3$-pseudomanifolds obtained by coning off the boundary components of an arbitrary finite-sheeted cover of $M_n$.  Indeed, one may use Wise's Malnormal Special Quotient Theorem \cite[Theorem 12.2]{Wise21} together with the cubulability of $\pi_1(M_n)$ \cite[Theorem 17.14]{Wise21} to see that each $M_n$ has some finite sheeted cover $M_n'$ so that $\pi_1(\coneoff M_n')$ \emph{is} cubulable. 
\end{remark}

\begin{remark}[Gromov boundary]\label{rmk:boundary}
    The Gromov boundary of the groups $\pi_1(\coneoff M_n)$ in Theorem~\ref{thm:main} is the tree of manifolds defined by closed orientable surfaces of positive genus, i.e., 
    a Pontryagin sphere, see \cite[Theorem A.1]{KM} and \cite{SW20}.
    Moreover, the Gromov boundary of the infinite quasiconvex subgroup with property (T) has limit set a Menger curve, see \cite{KK00}.
\end{remark}

\begin{remark}[Property (T) vs Haagerup Property]\label{rmk:T}
The groups $\pi_1(\coneoff M_n)$ in Theorem~\ref{thm:main} 
split over quasiconvex surface subgroups (see Proposition~\ref{prop:split}), so they do not have property (T).
In particular, they are $3$-dimensional word hyperbolic groups without (T) and without Haagerup, whose boundary is connected and has cyclic top-dimensional \v Cech cohomology (since the boundary is a Pontryagin sphere, see \cite[Theorem 6.2]{SW20}).
See \cite[Remark 5.6]{LR25} for other examples in dimension  $\geq 9$, whose boundary is a topological sphere.
For examples of non-Kleinian hyperbolic 3-pseudomanifold groups that have the Haagerup property, and even act convex cocompactly on $\hh^n$ for some $n>3$, see the RACGs considered in  \cite{DLMR25}. 
\end{remark}

\begin{remark}[Relatively geometric cubulation]\label{rem:agol}
    Ian Agol pointed out the following consequence of our construction (see Proposition~\ref{prop:agol}):
    For any relatively geometric cubulation (in the sense of \cite{EG20}) of any of the Kleinian groups $\pi_1(M_n)$ that we construct, there must be a hyperplane stabilizer with infinite intersection with some boundary subgroup of $\pi_1(M_n)$.
    This is in contrast to the situation for finite volume cusped $3$--manifolds.  The fundamental groups of such manifolds admit relatively geometric cubulations by quasi-Fuchsian closed surface subgroups  by the ubiquity results in~\cite{CF19,KW21}.
\end{remark}

\subsection*{Outline of the paper}
Our proof strategy can be summarized as follows.  We first describe a sequence $T_n$ of simplicial $2$-complexes whose fundamental groups have property (T), and so that as $n$ tends to infinity, the girth of links of vertices in $T_n$ also tends to infinity.  For large $n$ we will embed these complexes $\pi_1$-injectively into negatively curved $3$-pseudomanifolds.  To embed such a complex, we first delete a regular neighborhood of the vertices, and thicken the resulting hexagon complex to a $3$-dimensional handlebody.  Mirroring the boundary of this handlebody appropriately, we obtain a $3$-orbifold with boundary $H_n$ (usually referred to in the sequel as $H$).  The boundary of $H_n$ contains a $\pi_1$-injective graph corresponding to the union of links of vertices of $T_n$.  The $3$-orbifold $H_n$ is (orbifold) covered by a hyperbolic $3$-manifold $M$ with totally geodesic boundary, which we cone off to obtain our $3$-pseudomanifold.  The handlebody $H_n$ lifts to $M$, giving rise to a subset of the cone-off homotopy equivalent to $T_n$.

In \S\ref{sec:prelim} we fix our notation and terminology about the construction of pseudomanifolds as cone-offs of manifolds and about orbifolds. Moreover, we collect some preliminary material from \cite{LMW19} about the $2$-dimensional simplicial complexes $T_n$ whose fundamental groups have property (T).
In \S\ref{sec:main orbifold} we present the construction of a particular hyperbolic $3$-orbifold with boundary, and in \S\ref{sec:covers} we show how to construct suitable finite covers  for which we can obtain quantitative control on various geometric quantities. 
The orbifold structure constructed in \S\ref{sec:main orbifold} is given by orthogonal mirrors. This is reminiscent of the Davis reflection trick \cite{DA83}. However, we use the mirror structure to construct manifolds with boundary instead of closed manifolds.
The geometric control enables us to use the results from \cite{KM} to construct negatively curved metrics on the cone-offs of such covers. The proofs of the main theorem and corollary stated in the Introduction are presented in \S\ref{sec:proofs}. We end the paper with some open questions in \S\ref{sec:questions}.

\vspace{.25cm}

\noindent \textbf{Acknowledgments.}
J.M. was partially supported by the Simons Foundation, grant \#942496.
L.R. was partially supported by INDAM-GNSAGA.
We thank Chris Hruska for pointing us to useful references.  We thank Ian Agol for pointing out Proposition~\ref{prop:agol}, and Daniel Groves for useful conversations.

\section{Preliminaries}\label{sec:prelim}

\subsection{Coning off}\label{sec:coneoff}
Here we state the results from~\cite{KM} which allow us to put negatively curved (i.e., locally $\CAT{k}$ for some $k<0$) metrics on our $3$-pseu\-do\-man\-i\-folds. 
Let $M$ be a compact $3$-manifold with boundary. 
The \textit{cone-off} is the   space
\[ \coneoff{M} = M \sqcup (\partial M\times [0,1])/\sim \]
where $\sim$ is the equivalence relation generated by
\begin{itemize}
\item $x\sim (x, 1)$ if $x\in \partial M$; and
\item $(x, 0)\sim (y,0)$ if $x$ and $y$ lie in the same component of $\partial M$.
\end{itemize}
The resulting space $\coneoff M$ is a closed $3$-pseudomanifold (since $3$-manifolds can be triangulated).
If $Z\subset M$, then we define the \textit{induced cone-off} $\coneoff{Z}$ to be the subset which is the image of
\[ Z\sqcup (\partial M\cap Z) \times [0,1] \]
in the quotient space $\coneoff{M}$.

Sufficient geometric conditions are given in~\cite{KM} for $\coneoff{M}$ to admit a negatively curved metric $\hat{d}$ and 
 for the induced cone-off of a subspace $Z$ to be locally convex.
To state the conditions we need a definition from~\cite{KM}: 
\begin{definition}
  Let $X$ be a geodesic space, and let $\Upsilon\subset X$ be closed.  The \emph{buffer width} of $\Upsilon$ in $X$, written $\bw{X}{\Upsilon}$, is half the length of the shortest nondegenerate local geodesic in $X$ intersecting $\Upsilon$ only in its endpoints.  If there is no such local geodesic, then $\bw{X}{\Upsilon} = \infty$. 
\end{definition}
Here are a couple of special cases. If $X$ is a closed non-positively curved Riemannian manifold and $x\in X$, then $\bw{X}{\{x\}}$ is the injectivity radius at $x$. More generally, if $Z$ is an embedded totally geodesic submanifold of  $X$, then $\bw{X}{Z}$ is the normal injectivity radius of $Z$.

The first result is about when the boundary cone-off $\coneoff{M}$ of a compact hyperbolic $3$-manifold $M$ can be given a negatively curved metric.
For $A\subset M$, we denote by $N_b(A)$ the open neighborhood of radius $b$ around $A$.
\begin{theorem} \label{thm:KM1} \cite[Theorem A]{KM}
 Let $M$ be a compact hyperbolic manifold with totally geodesic boundary, let $b$ be a positive number less than $\bw{M}{\partial M}$ and let $c>\pi/\sinh(b)$.
 Suppose \[\injrad(\partial M) > c.\]
  Then there is a negatively curved metric $\hat{d}$ on $\coneoff{M}$ and an embedding of $M\smallsetminus N_b(\partial M)$ into $(\widehat{M},\hat{d})$ which is a local isometry with image equal $M \subset \coneoff M$.
\end{theorem}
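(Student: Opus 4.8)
The plan is to obtain $\hat d$ by surgery on the hyperbolic metric of $M$: keep it unchanged on $M\smallsetminus N_b(\partial M)$, and replace the neighborhood of the cone points --- namely $N_b(\partial M)$ together with the external collar $\partial M\times[0,1]$ --- by a negatively curved warped‑product cone over $\partial M$. Write $\partial M=\bigsqcup_i\Sigma_i$, each $\Sigma_i$ with its induced hyperbolic metric $g_i$. Since $\partial M$ is totally geodesic and $M$ has constant curvature $-1$, a collar of $\partial M$ in $M$ is isometric to $\bigsqcup_i\Sigma_i\times[0,\bw{M}{\partial M})$ with metric $dt^2+\cosh^2(t)\,g_i$, where $t$ denotes distance from $\partial M$. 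So the region to be modified is a disjoint union of open cones on the $\Sigma_i$, and on each I would put the metric $ds^2+f(s)^2g_i$ for $s\in[0,L]$, with $L>0$ and $f\colon[0,L]\to\rr_{>0}$ smooth, chosen so that $f(0)=0$ (producing the cone point) and $f(s)=\cosh(s-L+b)$ for $s$ near $L$ (so that, via $t=s-L+b$, the new metric agrees to infinite order with the hyperbolic collar metric along $\partial(M\smallsetminus N_b(\partial M))$, making $\hat d$ well defined and smooth off the cone points).

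Two properties of $f$ will give the theorem. First, $f$ should be strictly convex with $f''\geq|\kappa|\,f$ on $(0,L)$ for some small $\kappa<0$: by the standard warped‑product curvature formulas, a $2$‑plane tangent to $\Sigma_i$ has curvature $-(1+(f')^2)/f^2$, a $2$‑plane spanned by $\partial_s$ and a vector tangent to $\Sigma_i$ has curvature $-f''/f$, and general $2$‑planes interpolate, so the smooth part of $(\coneoff M,\hat d)$ has sectional curvature at most $-\min\{|\kappa|,\cosh^{-2}(b)\}<0$ (using $f\leq\cosh(b)$ on $[0,L]$). Second, the initial slope $m:=f'(0)$ should be chosen as close to $\sinh(b)$ as desired; convexity together with $f=\cosh(\cdot-L+b)$ near $s=L$ forces $m<\sinh(b)$, while conversely, given $m_0<\sinh(b)$, one fixes $\delta>0$ with $\sinh(b-\delta)>m_0$, sets $f(s)=\tfrac{m}{\sqrt{|\kappa|}}\sinh(\sqrt{|\kappa|}\,s)$ on $[0,L-\delta]$ (the solution of $f''=|\kappa|f$, $f(0)=0$), and for $|\kappa|>0$ small and a suitable $L$ arranges $f(L-\delta)=\cosh(b-\delta)$, $f'(L-\delta)=\sinh(b-\delta)$ with $m\to\sinh(b-\delta)$ as $|\kappa|\to0$, so $m>m_0$; then $f$ is extended by $\cosh(\cdot-L+b)$ on $[L-\delta,L]$ and the junction smoothed while preserving $f''\geq|\kappa|f$.

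It then remains to verify that $(\coneoff M,\hat d)$ is locally $\CAT{\kappa}$ at each cone point. By construction a neighborhood of such a point is exactly the $\kappa$‑cone on its link $(\Sigma_i,m^2g_i)$, so by the standard cone criterion (Berestovskii; see Bridson--Haefliger) it is $\CAT{\kappa}$ iff $(\Sigma_i,m^2g_i)$ is $\CAT 1$. The latter is a surface of constant curvature $-1/m^2<1$, hence $\CAT 1$ iff it has no closed geodesic shorter than $2\pi$; its shortest closed geodesic has length $m\cdot\sys{\Sigma_i}=2m\cdot\injrad(\Sigma_i)$, so the condition reads $m\cdot\injrad(\partial M)\geq\pi$. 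Since $\injrad(\partial M)>c>\pi/\sinh(b)$, one can select $m<\sinh(b)$ above with $m\cdot\injrad(\partial M)>\pi$; then every link is $\CAT 1$, $(\coneoff M,\hat d)$ is locally $\CAT{\kappa}$, hence negatively curved, and because $\hat d$ equals the hyperbolic metric on $M\smallsetminus N_b(\partial M)$, the inclusion $M\smallsetminus N_b(\partial M)\hookrightarrow\coneoff M$ is a local isometry onto its image, the canonical copy of $M\smallsetminus N_b(\partial M)$ (identified with $M$) inside $\coneoff M$.

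The crux is the cone‑point analysis: recognizing that the local geometry of $\coneoff M$ at a cone point is controlled precisely by the $\CAT 1$‑ness of the boundary surface rescaled by the optimal initial slope, and extracting from this the sharp threshold $\injrad(\partial M)>\pi/\sinh(b)$ --- with $\sinh(b)$ the supremal slope at $0$ of a convex warping function matching the hyperbolic collar at distance $b$, and $\pi$ coming from the systole--injectivity‑radius identity on a hyperbolic surface. The warped‑product curvature computation and the smoothing of $f$ are routine.
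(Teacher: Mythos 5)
Your proof is correct. Note that the paper itself contains no proof of this statement --- it is quoted verbatim from \cite[Theorem A]{KM} --- but your argument (replace the $\cosh$-warped collar plus external cone by a convex warping function $f$ with $f(0)=0$ and $f'(0)=m$ arbitrarily close to $\sinh(b)$, bound the smooth sectional curvatures by the warped-product formulas, and handle the cone point via Berestovskii's criterion, reducing to $\CAT 1$-ness of $(\Sigma_i,m^2g_i)$, i.e.\ $m\cdot\injrad(\partial M)\ge\pi$) is the standard one and reproduces exactly the threshold $\injrad(\partial M)>\pi/\sinh(b)$ appearing in the cited theorem, so it is essentially the intended proof.
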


For $b$ as in the theorem, the normal exponential map gives a diffeomorphism $\eta\from \partial M \times [0,b) \to N_b(\partial M)$; the restriction $\eta|\{x\}\times [0,b)$ gives a geodesic orthogonal to $\partial M$ at $x$.  
\begin{definition}\label{def:tame}
    The set $Z\subset M$ \emph{is a tame product near $N_b(\partial M)$} if, for some $b'>b$, $Z\cap N_{b'}(\partial M) = \eta( (Z\cap \partial M)\times [0,b') )$.
\end{definition}  The following is a special case of a result proved in~\cite{KM}:
\begin{theorem} \label{thm:KM2} \cite[Theorem B]{KM}
  Let $M$, $b$, and $c$ satisfy the hypotheses of Theorem~\ref{thm:KM1} and let $\hat{d}$ be the negatively curved metric on $\coneoff{M}$ constructed in that theorem.  Suppose $Z\subset M$ is closed, locally convex, is a tame product near $N_b(\partial M)$, and that
  \[ \bw{\partial M}{Z\cap \partial M} > \frac{c}{2}. \]
  Then $\coneoff{Z}$ is isotopic to a locally convex set in $(\coneoff{M},\hat{d})$.
\end{theorem}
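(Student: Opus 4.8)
This is a special case of \cite[Theorem B]{KM}, and the plan is to prove it by unwinding the warped-product description of $\hat d$ supplied by the proof of Theorem~\ref{thm:KM1} and then checking local convexity region by region. Recall from that construction that $(\coneoff M,\hat d)$ is obtained by gluing the compact hyperbolic manifold $M\smallsetminus N_b(\partial M)$, with its convex equidistant boundary, to a \emph{cone region} $\mathcal C$ which is a warped product $d\rho^2+f(\rho)^2\,g$ over a radial interval $\rho\in[0,R]$, where $g$ is a fixed rescaling of the hyperbolic metric on $\partial M$, the cone point $v$ sits at $\rho=0$ (so $f(0)=0$), and $f$ is convex and matched at $\rho=R$ to the equidistant boundary of $M\smallsetminus N_b(\partial M)$; the inequalities $b<\bw{M}{\partial M}$, $c>\pi/\sinh(b)$ and $\injrad(\partial M)>c$ are precisely what permit the choice of such an $f$ with all sectional curvatures of $\mathcal C$ negative and with the link of $v$ a $\CAT{1}$ surface. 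The first step is to locate $\coneoff Z$ in this picture: over $M\smallsetminus N_b(\partial M)$ it equals $Z\smallsetminus N_b(\partial M)$, which is the intersection of the locally convex set $Z$ with the locally convex set $M\smallsetminus N_b(\partial M)$ and hence locally convex there; and the tame-product hypothesis, together with the definition of the induced cone-off, identifies $\coneoff Z$ over $\mathcal C$ with $\bigl([0,R]\times Y\bigr)\cup\{v\}$, where $Y:=Z\cap\partial M$, and shows that near the seam $\rho=R$ it is a radial product over $Y$ on both sides, so the two local pictures are compatible and it suffices to verify local convexity on each.

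The heart of the matter is to show that $\bigl([0,R]\times Y\bigr)\cup\{v\}$ is locally convex in $(\mathcal C,\hat d)$. I would first observe that $Y$ is locally convex in $(\partial M,g)$: since $Z$ is locally convex in $M$ and, by the tame-product hypothesis, meets $\partial M$ orthogonally, and since $\partial M$ is totally geodesic in $M$, a geodesic of $\partial M$ tangent to $Y$ is a geodesic of $M$ tangent to $Z$, hence stays locally in $Z\cap\partial M=Y$. A standard warped-product computation then shows that $[0,R]\times Y$ inherits local convexity from $Y$ --- the second fundamental form of $[0,R]\times Y$ in $[0,R]\times_f\partial M$ acquires no contribution from the radial direction and reduces, on fibre directions, to that of $Y$ in $\partial M$. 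The same computation, applied on the hyperbolic side (where $\hat d$ is again a warped product over $\partial M$ near the equidistant boundary and $\coneoff Z$ is again a radial product over $Y$), shows that this matches up with the local convexity already noted over $M\smallsetminus N_b(\partial M)$ and passes across the seam $\rho=R$. Finally, for the cone point: any local geodesic of $(\coneoff M,\hat d)$ through $v$ is radial on each side, running out along $\{(\rho,x_1)\}$ and back along $\{(\rho,x_2)\}$ with $x_1,x_2$ at distance at least $\pi$ in the link metric of $v$; if both of its endpoints lie on $\coneoff Z$, then $x_1,x_2\in Y$, and the whole geodesic lies in $[0,R]\times Y\cup\{v\}$.

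The step I expect to be the main obstacle is making precise the one genuinely quantitative input: that the hypothesis $\bw{\partial M}{Y}>c/2$ is exactly what rules out the remaining possible failure of convexity near $v$ --- a short local geodesic that enters $[0,R]\times Y$, leaves it, and returns, equivalently a pair of points of $Y$ at link-distance just below $\pi$ joined by a geodesic of the link of $v$ that leaves $Y$. Tracking through the admissible range for the slope $f'(0)$ dictated in Theorem~\ref{thm:KM1} --- which is where $c>\pi/\sinh(b)$ and $\injrad(\partial M)>c$ re-enter --- one sees that link-distance below $\pi$ corresponds to $g$-distance below $c$, so the bound $\bw{\partial M}{Y}>c/2$ forbids precisely these shortcuts and makes the truncated link over $Y$ $\pi$-convex; together with the previous paragraph this yields that $\coneoff Z$ is locally convex, after the small isotopy provided in \cite{KM} to absorb the finitely many points where $\hat d$ is not smooth. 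The real work lies in tracking the interplay among $b$, $c$, the warping function $f$, and the link geometry to see that $c/2$ is the right threshold.
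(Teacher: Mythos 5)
There is nothing in the paper to compare your argument against: Theorem~\ref{thm:KM2} is not proved in this paper at all. It is quoted verbatim as a special case of \cite[Theorem B]{KM}, just as Theorem~\ref{thm:KM1} is quoted as \cite[Theorem A]{KM}, and the present paper only \emph{applies} these results (in \S\ref{sec:proofs}) after verifying their hypotheses for the specific $M$, $b$, $c$, and $Z=H'$ constructed here. So your proposal is a blind reconstruction of the proof of an external theorem, not of anything this paper establishes.

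As such a reconstruction, your outline is reasonable in shape --- decompose $(\coneoff{M},\hat d)$ into the truncated hyperbolic piece and a cone region, check local convexity of $\coneoff Z$ on each piece, across the seam, and at the cone point, and use $\bw{\partial M}{Z\cap\partial M}>c/2$ to exclude short local geodesics through the link of the cone point that leave $\coneoff Z$. The soft parts (local convexity of $Z\smallsetminus N_b(\partial M)$, local convexity of $Y=Z\cap\partial M$ in the totally geodesic boundary via the tame-product/orthogonality observation, the radial product structure) are fine. But the argument is not complete where it matters: you explicitly defer the quantitative step relating link-distance $\pi$ to $g$-distance $c$ via the warping function, and this is exactly the content that makes $c/2$ the correct threshold rather than some other constant. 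You also assume a particular warped-product form for $\hat d$ and the existence of a ``small isotopy provided in \cite{KM}'' without justification; since neither is available from the text of this paper, these assumptions cannot be checked here and would have to be verified against \cite{KM} itself. In short: plausible outline, but the decisive estimate is asserted rather than proved, and in any case the correct citation-level answer is that this theorem is imported, not proved, in the present paper.
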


\subsection{Orbifolds}\label{sec:orbifolds}
For an introduction to the general notion of a smooth orbifold see~\cite{ BB12,BH99,BMP,CHK,Kap01}.  We briefly recall the idea:
An $n$-orbifold (possibly with boundary) consists of a topological space (the \emph{underlying} space) together with an open cover by sets of the form $U = \tilde{U}/\Gamma$, where $\tilde{U}$ is an open subset of $\rr^n$ or $\rr^{n-1}\times[0,\infty)$, and $\Gamma$ is a finite group of diffeomorphisms with nonempty fixed point set.  (Smoothness also implies that a nonempty intersection between these sets gives rise to smooth transition maps.)
If a point $x$ in the underlying space is the image of a fixed point of $\Gamma$, we say that the \emph{isotropy group} of $x$ is $\Gamma$.  (Strictly speaking we  replace $\Gamma$ by the finite subgroup of $O(n)$ given by taking the derivative of its elements; this gives a well-defined group up to conjugacy in $O(n)$.) The \emph{boundary} $\partial X$ is the $(n-1)$-orbifold whose underlying space consists of those points where the orbifold chart always comes from $\rr^{n-1}\times[0,\infty)$, and whose isotropy groups are the isotropy groups for $X$, restricted to $\rr^{n-1}\times\{0\}$.  The isotropy groups of the points in the boundary lie in $O(n-1)$.
The orbifold $X$ is said to be \emph{closed} if its underlying space is compact and $\partial X$ is empty.

In this paper we are interested only in smooth orbifolds with isotropy groups which are trivial or finite reflection groups.  These are sometimes called \emph{locally reflective} orbifolds.
The underlying space of such an orbifold will always be a manifold with boundary.  We refer to the boundary of the underlying space as the \emph{topological boundary}.  The underlying space of the boundary orbifold
$\partial X$
 is the closure of the set of topological boundary points with trivial isotropy group.

A \emph{mirror} in a reflective orbifold is the closure of a maximal connected subset in which the isotropy group at each point is $\zz/2$.

A $2$-dimensional locally reflective orbifold $\Sigma$ has underlying space equal to a surface with boundary.  This (topological) boundary decomposes as a union of mirrors and components of $\partial \Sigma$.  If $\Sigma$ is closed (as it always will be for us) the topological boundary is just a union of mirrors.
Where two mirrors intersect, the isotropy group is dihedral of order $2 n$ for some $n \ge 2$.  

A $3$-dimensional locally reflective orbifold $X$ has underlying space a $3$-manifold with boundary.  The topological boundary decomposes as a union of mirrors and components of $\partial X$.  Each component of $\partial X$ is a locally reflective $2$-orbifold.  The mirrors of $X$ are divided from each other and from $\partial X$ by some trivalent graph on the topological boundary.  Each edge of this graph consists of points with dihedral or $\zz/2\zz$ isotropy group, and each vertex has isotropy group which is some $3$-dimensional reflection group.  This reflection group is dihedral if the vertex is adjacent to a component of $\partial X$; otherwise it is a spherical triangle reflection group.

Suppose that $\Gamma$ is a discrete group of isometries of some \emph{geometry} $\tilde X$ (for example $\ee^n$, $\sphere^n$ or $\hh^n$).
The quotient $X = \leftQ{\tilde X}{\Gamma}$ naturally receives the structure of an orbifold. Such an orbifold is said to be \emph{geometric}
 (or \emph{Euclidean}, \emph{spherical}, \emph{hyperbolic}, etc. if we wish to be more specific).  The quotient map $\widetilde X\to X$ is an example of an \emph{orbifold covering map}.
 
  For example, the quotient of a 2-dimensional geometry by an $(l,m,n)$ triangle reflection group is an orbifold with underlying space a disc, with three points on the topological boundary having dihedral isotropy groups of orders $2l,2m,2n$, and all other boundary points having isotropy groups of order two.  
      
When $X$ is the quotient of a geometry by $\Gamma$, the \emph{orbifold fundamental group} $\piorb X$ may be identified with $\Gamma$, but may also be defined in terms of homotopy classes of orbifold loops at a basepoint.
For more on orbifold covering maps and the relation with orbifold fundamental group see \cite{CHK}, \cite[Chapter III.$\mathcal G$]{BH99}, \cite[Chapter 13]{RA19}.

%%%%%
\subsection{The seed with property (T)}\label{sec:turnover groups}
We describe here a construction which is closely related to the examples in \cite{LMW19}.  In particular we will describe some fundamental groups of triangle complexes which are subgroups of finite index in some of the groups considered there.

\begin{choice}\label{choice:prime}
We fix $k\geq 18$ such that $k-1$ is prime and congruent to $1$ mod $4$.
    (For concreteness the reader may suppose that $k = 18$. Our pictures will pretend $k=4$ for simplicity.)
\end{choice}

  Let $\Theta$ be the $1$-complex with two vertices and $k$ edges between them.  The following is a consequence of work of Lubotzky--Phillips--Sarnak~\cite{LPS88} and Margulis~\cite{MA88}.
\begin{theorem}\label{thm:tower}
  There is a tower of finite regular covers $\cdots \Lambda_n \to \Lambda_{n-1} \to \cdots \to \Theta$ with the following properties:
  \begin{enumerate}
    \item $\lim_{n\to\infty}\operatorname{girth}(\Lambda_n)= \infty$.
    \item For each $n$, the first eigenvalue of the (normalized) Laplacian on $\Lambda_n$ is larger than $\frac{1}{2}$.
\end{enumerate}
\end{theorem}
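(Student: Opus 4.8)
The plan is to extract the required tower from the arithmetic construction of Ramanujan graphs underlying \cite{LPS88,MA88}. First I would record that $\Theta$ is connected with Euler characteristic $2-k$, so its fundamental group is free of rank $k-1$ and its universal cover is the $k$-regular tree $T_k$; and that every connected bipartite $k$-regular graph $\Lambda$ covers $\Theta$, via the map sending the two colour classes of the bipartition to the two vertices of $\Theta$ and sending the edges to the $k$ edges of $\Theta$ according to a proper $k$-edge-colouring of $\Lambda$, which exists by K\H{o}nig's edge-colouring theorem. So it suffices to produce a tower of finite connected bipartite $k$-regular graphs, whose successive maps are regular covers, with girth tending to $\infty$ and with first normalized Laplacian eigenvalue larger than $\tfrac12$.

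Write $p:=k-1$, a prime with $p\equiv 1\pmod 4$, and fix an odd prime $q\ne p$ with $\left(\tfrac pq\right)=-1$ (one exists, e.g.\ by Dirichlet's theorem, since $\left(\tfrac pq\right)=\left(\tfrac qp\right)$). By \cite{LPS88,MA88} there is a cocompact lattice $\Gamma<\operatorname{Aut}(T_k)$, namely the image in $\operatorname{PGL}_2(\qq_p)$ of the units, modulo centre and powers of $p$, of a Hamilton quaternion order over $\qq$ acting on the Bruhat--Tits tree of $\operatorname{PGL}_2(\qq_p)$, together with the descending chain of finite-index normal congruence subgroups $\Gamma\rhd\Gamma(q)\rhd\Gamma(q^2)\rhd\cdots$, satisfying $\bigcap_n\Gamma(q^n)=\{1\}$ and with every $\Gamma(q^n)$ torsion-free, such that each Schreier graph $\Lambda_n:=\Gamma(q^n)\backslash T_k$ is a genuine finite connected $k$-regular graph that is \emph{Ramanujan}: every adjacency eigenvalue $\mu\ne\pm k$ satisfies $|\mu|\le 2\sqrt p$. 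Moreover $\Lambda_1$ is the Lubotzky--Phillips--Sarnak graph $X^{p,q}$, which is bipartite because $\Gamma/\Gamma(q)\cong\operatorname{PGL}_2(\ff_q)$ with its $k$ adjacency generators all lying in the nontrivial coset of $\operatorname{PSL}_2(\ff_q)$; hence every $\Lambda_n$, being a cover of $\Lambda_1$, is bipartite.

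Next I would verify the requirements. Since $\Gamma(q^n)\subseteq\Gamma(q^{n-1})$ and $\Gamma(q^n)\lhd\Gamma\supseteq\Gamma(q^{n-1})$, the natural map $\Lambda_n\to\Lambda_{n-1}$ is a regular cover with deck group $\Gamma(q^{n-1})/\Gamma(q^n)$; by the first paragraph I then fix one covering map $\Lambda_1\to\Theta$ and precompose it down the tower, so that $\cdots\to\Lambda_n\to\Lambda_{n-1}\to\cdots\to\Lambda_1\to\Theta$ is a tower of covers with the maps $\Lambda_n\to\Lambda_{n-1}$ regular. For the girth: $\Gamma$ is finitely generated and linear, hence residually finite, and $\bigcap_n\Gamma(q^n)=\{1\}$; since the elements of $\Gamma$ of translation length at most $R$ on $T_k$ fall into finitely many conjugacy classes, for $n$ large the normal torsion-free subgroup $\Gamma(q^n)$ contains no nontrivial such element, so $\operatorname{girth}(\Lambda_n)>R$, and hence $\operatorname{girth}(\Lambda_n)\to\infty$. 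For the spectral gap: writing $\mathcal L=I-\tfrac1kA$ for the normalized Laplacian, its first positive eigenvalue is $\lambda_1(\Lambda_n)=1-\mu_1(\Lambda_n)/k$, where $\mu_1$ denotes the second largest adjacency eigenvalue; the Ramanujan bound gives $\mu_1\le 2\sqrt{k-1}$, and since $k\ge 18$ we have $(k-8)^2>48$, i.e.\ $2\sqrt{k-1}<k/2$, whence $\lambda_1(\Lambda_n)>\tfrac12$.

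The main obstacle is the Ramanujan eigenvalue bound itself: identifying the nontrivial adjacency eigenvalues of $\Gamma(q^n)\backslash T_k$ with Hecke eigenvalues of weight-two automorphic forms and bounding them by $2\sqrt p$ rests on Deligne's theorem (via Eichler and Igusa--Shimura), the deep content of \cite{LPS88,MA88}, which I would quote as a black box. The remainder is bookkeeping --- pinning down the quaternion order and congruence conventions so that $\Gamma(q)\backslash T_k$ is literally the bipartite Lubotzky--Phillips--Sarnak graph, the elementary count of short closed geodesics giving the girth bound, and arranging the covering maps to $\Theta$ compatibly along the tower (handled by fixing $\Lambda_1\to\Theta$ once and composing) --- the only genuinely delicate point being the numerical condition $k\ge 18$, which is precisely what makes $(k-8)^2>48$ in the final step.
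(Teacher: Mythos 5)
Your overall route—quaternionic arithmetic lattices acting on the $(k{=}p{+}1)$-regular tree, congruence subgroups $\Gamma(q^n)$ with $\bigl(\tfrac{p}{q}\bigr)=-1$, the Ramanujan bound quoted as a black box, a soft residual-finiteness argument for the girth, and the computation $2\sqrt{k-1}<k/2$ for $k\ge 18$—is exactly the one the paper takes (the paper simply cites \cite{LMW19} and \cite{Lub94} for all of this), and those parts of your argument are correct. But there is one genuine gap: the theorem, and crucially its use later in the paper, requires each $\Lambda_n\to\Theta$ to be a \emph{regular} cover of $\Theta$, i.e.\ to correspond to a \emph{normal} finite-index subgroup of $\pi_1(\Theta)$. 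This is what makes $Q_n=\pi_1(\Theta)/N_n$ a group and $\phi_n$ a homomorphism in \S\ref{sec:turnover groups}. You only establish regularity of the intermediate maps $\Lambda_n\to\Lambda_{n-1}$ (with deck group $\Gamma(q^{n-1})/\Gamma(q^n)$), and then bolt on a covering $\Lambda_1\to\Theta$ produced by K\H{o}nig's edge-colouring theorem. An arbitrary proper $k$-edge-colouring gives \emph{some} covering of $\Theta$, hence some finite-index subgroup $N_n\le\pi_1(\Theta)$, but there is no reason for $N_n$ to be normal in $\pi_1(\Theta)$: it is normal only in the subgroup corresponding to $\Lambda_1$. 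So as written your tower need not consist of regular covers of $\Theta$.

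The fix is the point the paper's proof actually emphasizes: one must realize $\pi_1(\Theta)$ \emph{itself} as the arithmetic lattice. Concretely, instead of working with the vertex-transitive LPS lattice (whose quotient of $T_k$ is a bouquet, not $\Theta$), one passes to the type-preserving subgroup $\Gamma$ acting freely on $T_k$ with two vertex orbits and $k$ edge orbits; an Euler characteristic count shows $\Gamma\backslash T_k\cong\Theta$ and $\Gamma\cong F_{k-1}\cong\pi_1(\Theta)$. The congruence subgroups $\Gamma(2q^n)$ are then normal in $\Gamma=\pi_1(\Theta)$, so the quotients $\Lambda_n=\Gamma(2q^n)\backslash T_k$ come with canonical \emph{regular} covering maps to $\Theta$, and the rest of your argument (bipartiteness, girth, spectral gap) goes through verbatim. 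Without this adjustment the subsequent definition of $\Phi_n\from\pi_1(T_0)\to Q_n\times Q_n$ does not make sense.
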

\begin{proof}
  This is explained in~\cite{LMW19}, following closely \cite[Chapter 7]{Lub94}.
  For the fact that the graphs can be taken in a tower, see the bottom of page 64 of~\cite{LMW19}.  The reason is that the subgroups of $\pi_1(\Theta)$ corresponding to the covers $\Lambda_n\to \Theta$ arise as congruence subgroups $\Gamma(2q_n)$ for an arithmetic group $\Gamma\cong \pi_1(\Theta)$, and the numbers $q_n$ can be taken to be successive powers of a prime $q$ where $k-1$ is not a square modulo $q$.  (For example if $k-1 = 17$ we could take $q_n = 3^n$ or $q_n = 5^n$.)
\end{proof}

Let $\Delta$ be a $2$-simplex.  A \emph{$k$-fold turnover} is the complex
\[ \Delta\times\{1,\ldots,k\}/ (x,i)\sim (x,j)\mbox{ when }x\in\partial \Delta. \]

Let $T_0$ be the  hexagonal $2$-complex obtained from a $k$-fold turnover by chopping off the three tips.\footnote{In other words $T_0$ is equal to a $k$-fold turnover minus an open regular neighborhood of its $0$-skeleton.}
Any two hexagons in $T_0$ form a pair of pants.
The boundary of $T_0$ in the turnover consists of three copies of $\Theta$. 
We write $P_1,P_2,P_3$ for the fundamental groups of these copies of $\Theta$.  (The reader who is concerned about basepoints here should imagine that all the basepoints lie in the hexagon contained in $\Delta\times\{k\}$.)
Since $T_0$ deformation retracts to a copy of the complete bipartite graph $K_{3,k}$, its fundamental group is free, specifically
 $\pi_1(T_0) = \ff_{2k-2}$.

Fix a basis $x_1,\cdots,x_{k-1}$ of $\pi_1 (\Theta)$ and isomorphisms $\psi_i\from \pi_1 (\Theta) \to P_i$ for $i = 1,2,3$, so that  the relations $\psi_1(x_j)\psi_2(x_j)\psi_3(x_j)$ are satisfied for $j=1,\dots,k-1$.  (These are the relations coming from the pairs of pants in $T_0$.)  

Let $\tau\from \pi_1 (\Theta)\to \pi_1 (\Theta)$ be the automorphism sending each $x_j$ to $x_j^{-1}$.  Note that $\tau$ is induced by the graph automorphism of $\Theta$ swapping the two vertices and sending each edge to itself.
Let $Q_n$ be the quotient of $\pi_1(\Theta)$ by the normal subgroup corresponding to the cover $\Lambda_n\to \Theta$ from Theorem~\ref{thm:tower}, and let $\phi_n\from \pi_1(\Theta)\to Q_n$ be the natural quotient map.
Now define a map $\Phi_n\from \pi_1( T_0)\to Q_n\times Q_n$ by defining it consistently on the (generating) subgroups $P_i$:
\begin{align*}
\Phi_n|_{P_1} & = (\phi_n\circ\psi_1^{-1},1)\\
\Phi_n|_{P_2} & = (1,\phi_n\circ\psi_2^{-1})\\
\Phi_n|_{P_3} & = (\phi_n\circ\tau\circ\psi_3^{-1},\phi_n\circ\tau\circ\psi_3^{-1})
\end{align*}

\begin{definition}\label{def:Tn}
    Let $T_n$ be the finite cover of $T_0$ corresponding to the kernel of $\Phi_n$.
\end{definition}

Using the isomorphism  $\psi_i\from \pi_1 (\Theta)\to P_i$, we can identify $\ker\Phi_n|_{P_i}=\ker \Phi_n \cap P_i$ 
 with $\ker \phi_n$ for $i=1,2$ and with $\tau(\ker \phi_n)=\ker (\phi_n\circ \tau)$ for $i=3$.

\begin{lemma}\label{lem:turnover (T)}
  Let $\coneoff{T}_n$ be the triangle complex obtained by coning off the elevations of $\Theta$ in $T_n$.  If $n$ is sufficiently large, then the link of every vertex of $\coneoff{T}_n$ has girth at least $6$ and $\pi_1(\coneoff{T}_n)$ has property (T).
\end{lemma}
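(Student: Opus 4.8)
The plan is to verify the two assertions separately, handling girth first (a purely combinatorial matter about vertex links in the triangle complex) and property (T) second (via the quotient $\Phi_n$ and the spectral gap from Theorem~\ref{thm:tower}).

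For the girth statement, recall that $\coneoff{T}_n$ is obtained from the hexagon complex $T_n$ by reglueing cone points on the elevations of $\Theta$, so the vertices of $\coneoff{T}_n$ are of two types: the cone points, and the vertices of $T_n$ that already appeared in the hexagon complex. First I would compute the link of a cone point: coning off an elevation $\widetilde\Theta$ of $\Theta$ inside $T_n$ produces a vertex whose link is precisely that elevation $\widetilde\Theta$, which is a subgraph of $\Lambda_n$ (more precisely, a connected cover of $\Theta$ of the appropriate degree), so its girth is at least $\operatorname{girth}(\Lambda_n)$, which tends to $\infty$ by Theorem~\ref{thm:tower}(1); in particular it is $\geq 6$ for large $n$. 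For the remaining vertices, the link of a vertex of $T_n$ is a cover of the link of the corresponding vertex in $T_0$; since $T_0$ is a hexagon complex in which any two hexagons form a pair of pants and all hexagons share the three boundary edges of the turnover, the link of such a vertex in $T_0$ is a complete bipartite graph of the form $K_{2,k}$ (two cone directions along the hexagon sides meeting a vertex, $k$ hexagons), which is square-free only after coning — here one must be slightly careful, since $K_{2,k}$ contains $4$-cycles. The coning-off operation is what removes these short cycles: passing to $\coneoff{T}_n$ subdivides/augments each such link by the new cone vertices so that the relevant link becomes a cover of the link in a genuine turnover complex, where (by the standard fact that a $k$-fold turnover of a triangle is a simplicial complex with flag-no-square links) the links have girth $\geq 6$. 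I would make this precise by identifying the link of each non-cone vertex of $\coneoff{T}_n$ as a cover of the link of the corresponding vertex in the coned-off turnover $\coneoff{T}_0$, and observing covers do not decrease girth.

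For property (T), the key point is that $\pi_1(\coneoff{T}_n)$ is a quotient of $\pi_1(T_0)$ (coning off the copies of $\Theta$ kills exactly the peripheral subgroups $P_1, P_2, P_3$), and more usefully that its relevant finite quotients carry a spectral gap coming from Theorem~\ref{thm:tower}(2). I would invoke the criterion of \.Zuk / Ballmann--\'Swi\k{a}tkowski for property (T): a group acting properly cocompactly on a simply connected simplicial $2$-complex all of whose vertex links are connected graphs with first nonzero Laplacian eigenvalue $> 1/2$ has property (T). The links of cone vertices of $\widetilde{\coneoff{T}_n}$ are elevations of $\Theta$ to the kernel of $\Phi_n$, and by construction of $\Phi_n$ these elevations are (after restricting $\Phi_n$ to each $P_i$ and using the identifications $\ker\Phi_n\cap P_i \cong \ker\phi_n$ or $\tau(\ker\phi_n)$) exactly the covers $\Lambda_n \to \Theta$, whose normalized Laplacian has first eigenvalue $> 1/2$ by Theorem~\ref{thm:tower}(2). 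The links at the other vertices are the $K_{2,k}$-type bipartite graphs discussed above (or their covers), which are a fixed finite list of graphs independent of $n$, all with first Laplacian eigenvalue bounded away from (in fact well above) $1/2$ — this is the computation in \cite{LMW19} for the seed complex, and it is exactly why the constant $k \geq 18$ was chosen in Choice~\ref{choice:prime}. Combining, for $n$ large every vertex link of the universal cover of $\coneoff{T}_n$ satisfies the \.Zuk condition, so $\pi_1(\coneoff{T}_n)$ has property (T).

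The main obstacle I anticipate is the bookkeeping that identifies the link of a cone vertex of $\coneoff{T}_n$ with the graph $\Lambda_n$ rather than some other cover of $\Theta$: one must track how $\ker\Phi_n$ meets each peripheral subgroup $P_i$, use the isomorphisms $\psi_i$ and the twist $\tau$ to see that each such intersection corresponds to $\ker\phi_n$ (up to the automorphism $\tau$, which does not affect girth or the Laplacian spectrum since it is induced by a graph automorphism of $\Theta$), and then check that the elevations of $\Theta$ in the cover $T_n \to T_0$ are connected and have the right degree. A secondary, more routine obstacle is verifying that coning off genuinely removes all squares from the links of the old vertices and computing (or citing) the Laplacian eigenvalue of the fixed link graph $K_{2,k}$-with-cone to confirm it exceeds $1/2$ once $k \geq 18$; this is where the hypothesis $k \geq 18$ is used, and it is inherited directly from \cite{LMW19}.
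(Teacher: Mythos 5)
Your treatment of the cone vertices is exactly the paper's argument: every cone point of $\coneoff{T}_n$ has link an elevation of $\Theta$ corresponding to $\ker\Phi_n\cap P_i^g$, which via the $\psi_i$ and the graph automorphism inducing $\tau$ is isomorphic to $\Lambda_n$; Theorem~\ref{thm:tower} then gives both girth $\geq 6$ and $\lambda_1>\tfrac12$, and the Ballmann--\'Swi\k{a}tkowski/\.Zuk criterion (applied to the action on the universal cover, which the paper justifies by noting that the girth condition makes the equilateral metric locally $\CAT 0$, hence the complex aspherical) yields property (T). Had you stopped there, the proof would be essentially the paper's.

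The problem is your second class of vertices. In the \emph{triangle complex} $\coneoff{T}_n$, each hexagon of $T_n$ together with the three adjacent cone sectors reassembles into a single triangle, so the only vertices are the cone points; the vertices of the hexagonal cell structure of $T_n$ (the vertices of the elevations of $\Theta$) become interior points of edges of the triangles and contribute no links to check. So the entire second half of your girth argument and the corresponding part of your spectral argument address vertices that do not exist. Worse, the claims you make there are false: the link of a vertex of the $k$-fold turnover is the graph with two vertices and $k$ parallel edges (girth $2$), the turnover is not a simplicial complex (its $k$ triangles share all three edges), and it is certainly not flag-no-square --- indeed this is precisely why one must pass to the cover $T_n$ to get large-girth links. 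Relatedly, the constant $k\geq 18$ is not there to control the Laplacian of any $K_{2,k}$-type link; it is chosen so that the Ramanujan bound for the $k$-regular graphs $\Lambda_n$, namely $\lambda_1\geq 1-2\sqrt{k-1}/k$, exceeds $\tfrac12$. (A small additional slip: $\pi_1(\coneoff{T}_n)$ is a quotient of $\pi_1(T_n)$, not of $\pi_1(T_0)$.) Once you delete the spurious case and its false supporting claims, what remains is correct and coincides with the paper's proof.
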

\begin{proof}
 Since $\tau$ is induced by a graph automorphism of $\Theta$, all the vertex links in $\coneoff T_n$ are isomorphic as graphs to the cover $\Lambda_n$.  
By Theorem~\ref{thm:tower}, for $n$ large enough we can assume that the link of every vertex in $\coneoff T_n$ has girth at least $6$.
Now, if we identify each triangle of $\coneoff T_n$ with an equilateral Euclidean triangle, then the induced metric is locally $\CAT 0$, so $\coneoff{T}_n$ is aspherical.
This asphericity, together with the condition on the Laplacian of $\Lambda_n$ in Theorem~\ref{thm:tower}, allows us to apply \cite[Corollary 1]{BS97} (or \cite[Th\'eor\`eme 1]{Zuk96}) to deduce that $\pi_1(\coneoff {T}_n)$ has property (T).
\end{proof}

%%%
\section{The handlebody \texorpdfstring{$H_0$}{H0} and its hyperbolic orbifold structure} \label{sec:main orbifold}

Let $H_0$ be the $3$-manifold with boundary  obtained by thickening up $T_0$ in $\rr^3$, see Figure~\ref{fig:thickened}.
Note that $H_0$ is homotopy equivalent to $T_0$, so $\pi_1(H_0) = \ff_{2k-2}$.
(Recall $k\geq 18$ was fixed in the Choice of constants~\ref{choice:prime}.)
More precisely, $H_0$ is a handlebody of genus $g=2k-2$, whose topological boundary is a closed surface of genus $g=2k-2$, decomposed into
\begin{itemize}
    \item a subsurface $S_0$ whose connected components are three $k$-holed spheres, arising from the peripheral $\Theta$-graphs of $T_0$, and
    \item a subsurface $Y_0$ whose connected components are  $k$ pairs of pants, arising from the ``piping'' between $\Theta$-graphs.
\end{itemize}

\begin{figure}[ht]
    \centering
    \def\svgwidth{.9\columnwidth}
    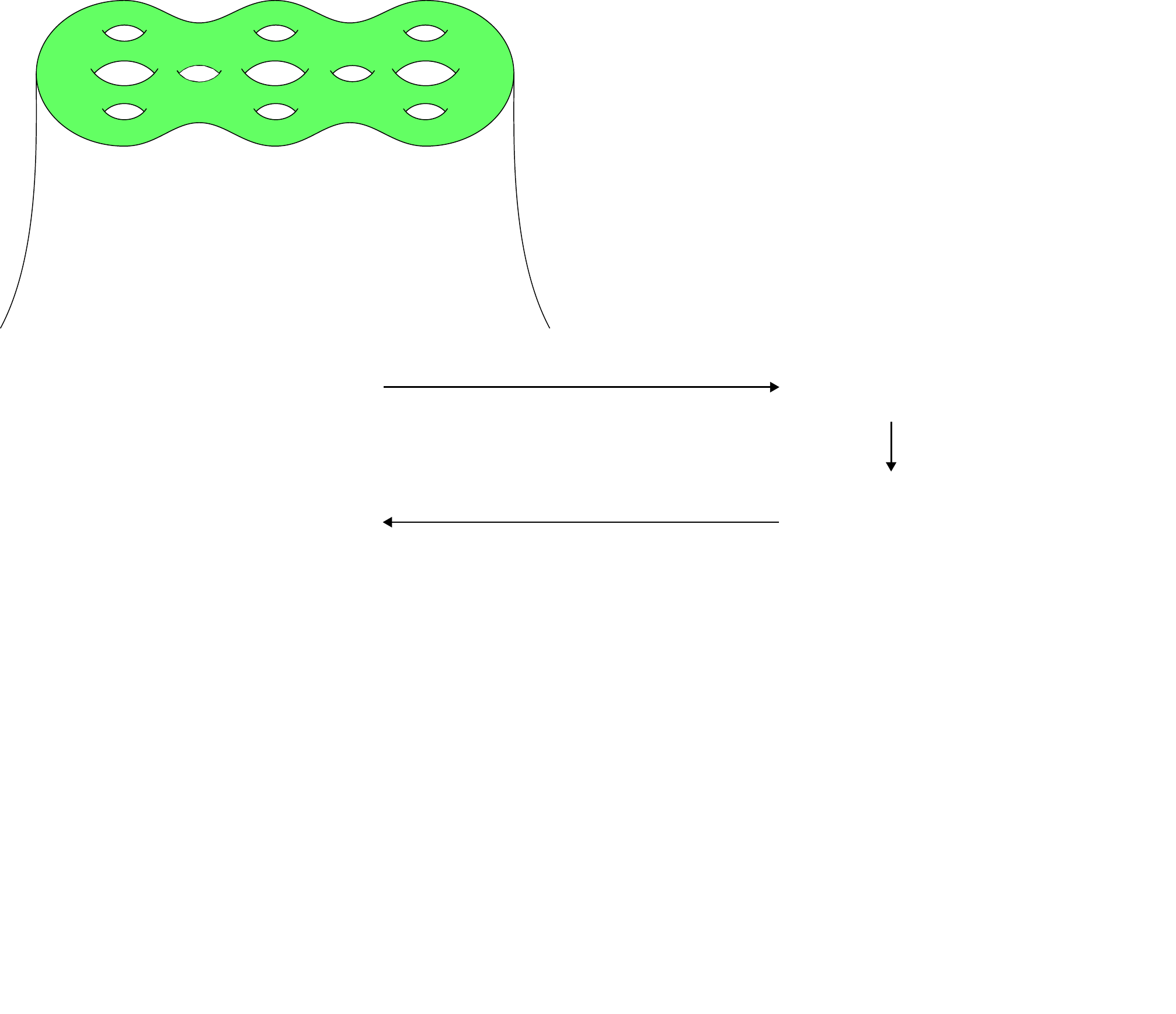
    \caption{The main covering steps in the construction. (The picture only shows a portion of $H$ and $M$.)
    The boundaries of $M$ (as a manifold) and of $H$, $H_0$, and $B$ (as orbifolds) are represented in green. The boundary of $M$ consists of closed surfaces,  the boundaries of $H$ and $H_0$ consist of compact surfaces with boundary, and the boundary of $B$ consists of the face $F_B$.}
    \label{fig:coverings}
\end{figure}

    To recall the strategy of our main construction: 
    We will first take the cover $p:H_n\to H_0$ corresponding to a cover $T_n\to T_0$ from Definition~\ref{def:Tn}, with $n$ large enough to apply Lemma~\ref{lem:turnover (T)}.
    We will then take an orbifold cover $M$ of $H=H_n$ to which $H=H_n$ lifts homeomorphically, and then cone off $\partial M$.  Choosing these covers carefully, and using the results from \cite{KM} (see \S\ref{sec:coneoff}) the inclusion of the induced cone-off $\widehat H$ into $\widehat M$ will have $\pi_1$-image isomorphic to $\pi_1(\widehat T_n)$, which has property (T) by Lemma~\ref{lem:turnover (T)}.
    
    In order to maintain control of the geometry, there will also be a further orbifold cover to a partially mirrored polyhedron $B$, which we use to endow $H_0$, $H=H_n$, and $M$ with nice hyperbolic structures; see Figure~\ref{fig:coverings}. For a more detailed picture of the polyhedron $B$, see  Figure~\ref{fig:prism}.

\subsection{The hyperbolic orbifold structure on \texorpdfstring{$H_0$}{H0}}\label{sec:hyperbolization}
We want to endow $H_0$ with the structure of a hyperbolic $3$-orbifold with boundary (see \S\ref{sec:orbifolds} for definitions). 
To do this, we will endow a certain quotient $B$ of $H_0$ with the structure of a reflective $3$-orbifold with boundary.  We will then use Andreev's Theorem on hyperbolic polyhedra to find a hyperbolic structure on this orbifold.
Finally we will lift this structure to $H_0$.
Each mirror of $H_0$  will be a finite-sided polygon in $Y_0$ and any two mirrors will either be disjoint or meet at right angles;  the surfaces in $S_0$ will be left unmirrored but they will also make right angles with their mirrored neighbors in $Y_0$.
To determine an appropriate orbifold structure, we choose 
a trivalent graph on the topological boundary of $H_0$ so that the non-simply connected regions in the complement are the components of $S_0$.
There are many ways to do this, but we choose a particular construction for definiteness.
 
For each pair of pants in $Y_0$, consider the combinatorial tiling into pentagons and octagons shown in Figure~\ref{fig:patterned_pants}.
The resulting decomposition of the boundary of $H_0$ is pictured in Figure~\ref{fig:thickened}.  The handlebody $H_0$ together with this decoration has symmetry group $G=D_3\times D_k$, where $D_k$ denotes the dihedral group of order $2k$.
The quotient of $H_0$ by $G$ is a pentagonal prism $B$, depicted in Figure~\ref{fig:prism}.
See Figure~\ref{fig:steps} for an illustration of the steps involved in taking this quotient.
The image of $S_0$ is the green face $F_B$ and the image of the topological boundary of $S_0$ is the bold edge $e$ on the left.

\begin{figure}[htbp]
  \centering
  \def\svgwidth{.5\columnwidth}
  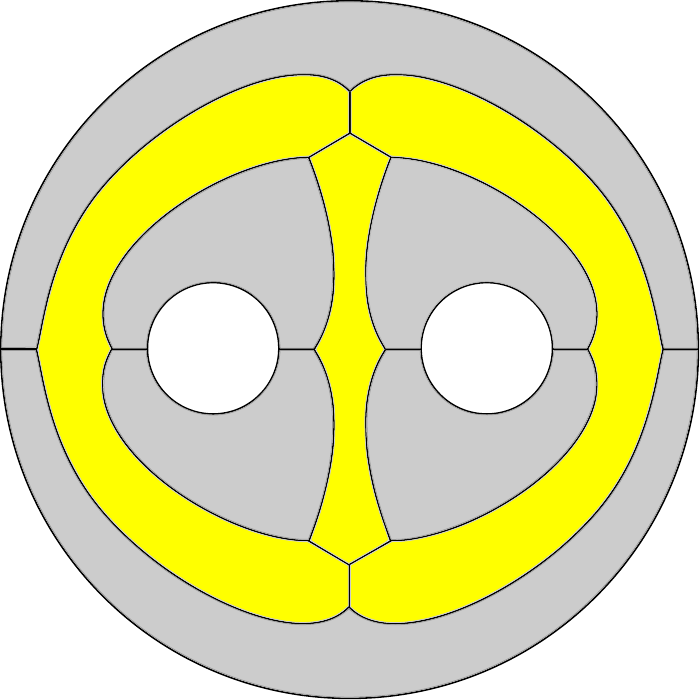
  \caption{Each pair of pants in $Y_0$ in the topological boundary of $H_0$ is cellulated by pentagons (gray) and octagons (yellow) as shown.}  %Note the $D_3$-symmetry of this cellulation.}
  \label{fig:patterned_pants}
\end{figure}

\begin{figure}[htbp]
  \centering
\def\svgwidth{.6\columnwidth}
  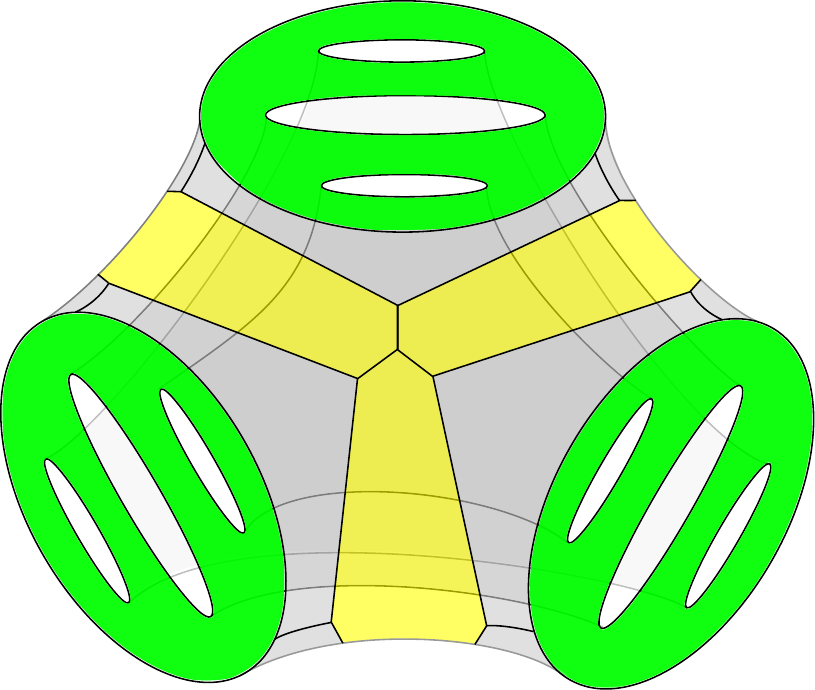
  \caption{A mirror structure on $H_0$, with $k=4$. The mirror pattern of $Y_0$ (see  Figure~\ref{fig:patterned_pants}) is shown only on the front of the outside pair of pants, but it should be replicated on the other $k-1$ pairs of pants.  The boundary of $H_0$ as an orbifold is colored green.}
  \label{fig:thickened}
\end{figure}

\begin{figure}[htbp]
  \centering
  \def\svgwidth{.5\columnwidth}
  %% Creator: Inkscape 1.3.2 (091e20e, 2023-11-25), www.inkscape.org
%% PDF/EPS/PS + LaTeX output extension by Johan Engelen, 2010
%% Accompanies image file 'prism2.pdf' (pdf, eps, ps)
%%
%% To include the image in your LaTeX document, write
%%   \input{<filename>.pdf_tex}
%%  instead of
%%   \includegraphics{<filename>.pdf}
%% To scale the image, write
%%   \def\svgwidth{<desired width>}
%%   \input{<filename>.pdf_tex}
%%  instead of
%%   \includegraphics[width=<desired width>]{<filename>.pdf}
%%
%% Images with a different path to the parent latex file can
%% be accessed with the `import' package (which may need to be
%% installed) using
%%   \usepackage{import}
%% in the preamble, and then including the image with
%%   \import{<path to file>}{<filename>.pdf_tex}
%% Alternatively, one can specify
%%   \graphicspath{{<path to file>/}}
%% 
%% For more information, please see info/svg-inkscape on CTAN:
%%   http://tug.ctan.org/tex-archive/info/svg-inkscape
%%
\begingroup%
  \makeatletter%
  \providecommand\color[2][]{%
    \errmessage{(Inkscape) Color is used for the text in Inkscape, but the package 'color.sty' is not loaded}%
    \renewcommand\color[2][]{}%
  }%
  \providecommand\transparent[1]{%
    \errmessage{(Inkscape) Transparency is used (non-zero) for the text in Inkscape, but the package 'transparent.sty' is not loaded}%
    \renewcommand\transparent[1]{}%
  }%
  \providecommand\rotatebox[2]{#2}%
  \newcommand*\fsize{\dimexpr\f@size pt\relax}%
  \newcommand*\lineheight[1]{\fontsize{\fsize}{#1\fsize}\selectfont}%
  \ifx\svgwidth\undefined%
    \setlength{\unitlength}{215.64927864bp}%
    \ifx\svgscale\undefined%
      \relax%
    \else%
      \setlength{\unitlength}{\unitlength * \real{\svgscale}}%
    \fi%
  \else%
    \setlength{\unitlength}{\svgwidth}%
  \fi%
  \global\let\svgwidth\undefined%
  \global\let\svgscale\undefined%
  \makeatother%
  \begin{picture}(1,0.95629016)%
    \lineheight{1}%
    \setlength\tabcolsep{0pt}%
    \put(0,0){\includegraphics[width=\unitlength,page=1]{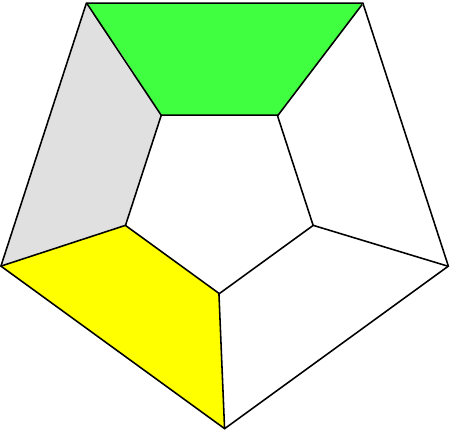}}%
    \put(0.67985497,0.57362441){\makebox(0,0)[lt]{\lineheight{1.25}\smash{\begin{tabular}[t]{l}$k$\end{tabular}}}}%
    \put(0.81733762,0.14453965){\makebox(0,0)[lt]{\lineheight{1.25}\smash{\begin{tabular}[t]{l}$3$\end{tabular}}}}%
    \put(0.23652703,0.56695358){\makebox(0,0)[lt]{\lineheight{1.25}\smash{\begin{tabular}[t]{l}$4$\end{tabular}}}}%
    \put(0.14450295,0.14022584){\makebox(0,0)[lt]{\lineheight{1.25}\smash{\begin{tabular}[t]{l}$4$\end{tabular}}}}%
    \put(0,0){\includegraphics[width=\unitlength,page=2]{prism2.pdf}}%
    \put(0.21565984,0.76867007){\makebox(0,0)[lt]{\lineheight{1.25}\smash{\begin{tabular}[t]{l}$e$\end{tabular}}}}%
    \put(0.45215503,0.81736021){\makebox(0,0)[lt]{\lineheight{1.25}\smash{\begin{tabular}[t]{l}$F_B$\end{tabular}}}}%
    \put(0.26435001,0.26785678){\makebox(0,0)[lt]{\lineheight{1.25}\smash{\begin{tabular}[t]{l}$F_\Sigma$\end{tabular}}}}%
  \end{picture}%
\endgroup%

  \caption{The pentagonal prism $B$.  The unlabeled edges have angle $\pi/2$.  Edges labeled by an integer $n$ have angle $\pi/n$. 
  The green face $F_B$ is the image of $S_0$. Its bold edge $e$ is the image of the topological boundary  of $S_0$. The yellow face $F_\Sigma$ is the image of the octagonal faces in $Y_0$.}
  \label{fig:prism}
\end{figure}

\begin{figure}[htbp]
  \centering
  \def\svgwidth{\columnwidth}
  %% Creator: Inkscape 1.3.2 (091e20e, 2023-11-25), www.inkscape.org
%% PDF/EPS/PS + LaTeX output extension by Johan Engelen, 2010
%% Accompanies image file 'quotient_steps.pdf' (pdf, eps, ps)
%%
%% To include the image in your LaTeX document, write
%%   \input{<filename>.pdf_tex}
%%  instead of
%%   \includegraphics{<filename>.pdf}
%% To scale the image, write
%%   \def\svgwidth{<desired width>}
%%   \input{<filename>.pdf_tex}
%%  instead of
%%   \includegraphics[width=<desired width>]{<filename>.pdf}
%%
%% Images with a different path to the parent latex file can
%% be accessed with the `import' package (which may need to be
%% installed) using
%%   \usepackage{import}
%% in the preamble, and then including the image with
%%   \import{<path to file>}{<filename>.pdf_tex}
%% Alternatively, one can specify
%%   \graphicspath{{<path to file>/}}
%% 
%% For more information, please see info/svg-inkscape on CTAN:
%%   http://tug.ctan.org/tex-archive/info/svg-inkscape
%%
\begingroup%
  \makeatletter%
  \providecommand\color[2][]{%
    \errmessage{(Inkscape) Color is used for the text in Inkscape, but the package 'color.sty' is not loaded}%
    \renewcommand\color[2][]{}%
  }%
  \providecommand\transparent[1]{%
    \errmessage{(Inkscape) Transparency is used (non-zero) for the text in Inkscape, but the package 'transparent.sty' is not loaded}%
    \renewcommand\transparent[1]{}%
  }%
  \providecommand\rotatebox[2]{#2}%
  \newcommand*\fsize{\dimexpr\f@size pt\relax}%
  \newcommand*\lineheight[1]{\fontsize{\fsize}{#1\fsize}\selectfont}%
  \ifx\svgwidth\undefined%
    \setlength{\unitlength}{1257.811039bp}%
    \ifx\svgscale\undefined%
      \relax%
    \else%
      \setlength{\unitlength}{\unitlength * \real{\svgscale}}%
    \fi%
  \else%
    \setlength{\unitlength}{\svgwidth}%
  \fi%
  \global\let\svgwidth\undefined%
  \global\let\svgscale\undefined%
  \makeatother%
  \begin{picture}(1,0.26403647)%
    \lineheight{1}%
    \setlength\tabcolsep{0pt}%
    \put(0,0){\includegraphics[width=\unitlength,page=1]{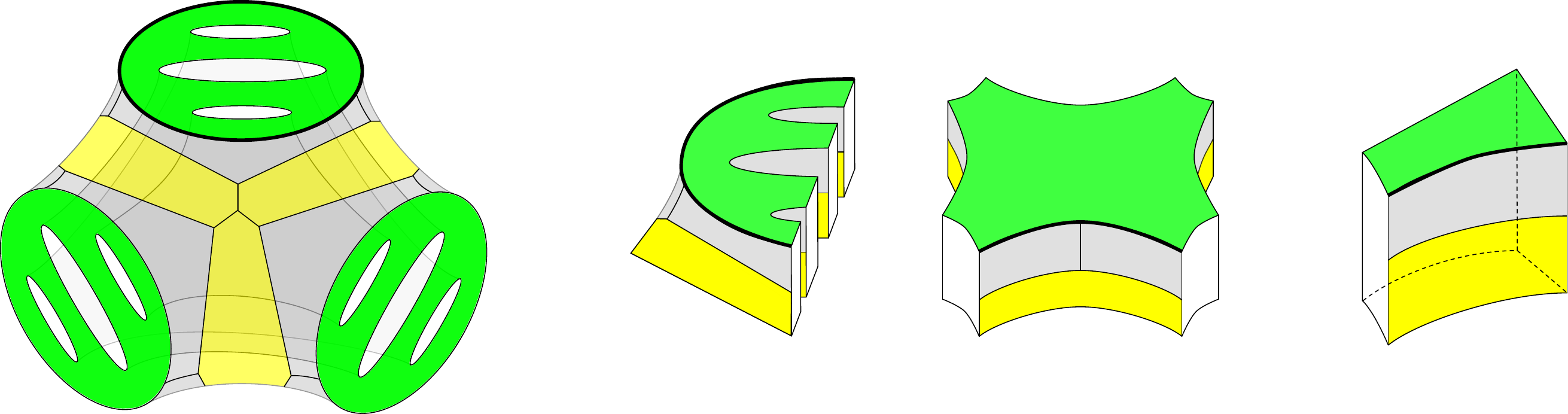}}%
    \put(0.57321634,0.12268039){\color[rgb]{0,0,0}\makebox(0,0)[t]{\lineheight{1.25}\smash{\begin{tabular}[t]{c}=\end{tabular}}}}%
    \put(0.35873,0.12735958){\color[rgb]{0,0,0}\makebox(0,0)[t]{\lineheight{1.25}\smash{\begin{tabular}[t]{c}$\overset{D_3}{\longrightarrow}$\end{tabular}}}}%
    \put(0.82382326,0.12735958){\color[rgb]{0,0,0}\makebox(0,0)[t]{\lineheight{1.25}\smash{\begin{tabular}[t]{c}$\overset{D_k}{\longrightarrow}$\end{tabular}}}}%
  \end{picture}%
\endgroup%

  \caption{From left to right: The handlebody $H_0$, with $k=4$;  the quotient of $H_0$ by the $D_3$-symmetry; a different representation of the same quotient, in which  the $D_k$-symmetry is more clear; the further quotient by the $D_k$-symmetry, i.e., the pentagonal prism $B$, also represented in Figure~\ref{fig:prism}.}
  \label{fig:steps}
\end{figure}

As the quotient of a manifold under the action of a reflection group, the polyhedron $B$ admits a natural  structure of a locally reflective orbifold with boundary with isotropy groups given by the stabilizers for the action of $G$.  This is not quite the structure we wish to use, though.  We augment this structure with two additional mirrors, coming from the pentagonal faces and octagonal faces, (these are the gray and yellow faces in Figure~\ref{fig:prism}, respectively).  We define isotropy groups at edges and vertices consistently with the labels in Figure~\ref{fig:prism}.  Namely, if two mirrored faces meet along an unlabeled edge, the isotropy group on that edge is dihedral of order $4$; if such an edge is labeled by a number $n$, the isotropy group is dihedral of order $2n$.  The image of $S_0$ (the green face in Figure~\ref{fig:prism}) is \emph{not} mirrored, and forms the orbifold boundary of this structure on $B$.

The underlying space of $B$ is a closed ball, but the orbifold structure suggests an abstract polyhedron, with dihedral angles of $\pi/2$ at all edges involving the unmirrored face $F_B$, and $\pi/k$ at each edge with isotropy group dihedral of order $2k$.
One can see by direct inspection that this polyhedron satisfies the hypotheses of Andreev's Theorem (see~\cite[Theorem 1.4]{RHD07}, which corrects some mistakes in the original~\cite{And70}).  
In particular, there is a unique hyperbolic polyhedron with the specified dihedral angles, and we identify this polyhedron with $B$.  Mirroring all the faces with an acute adjacent dihedral angle
 gives a hyperbolic orbifold with the same orbifold structure specified above on $B$.  In other words the polyhedron determines a hyperbolic structure on the orbifold $B$.  Since the dihedral angles adjacent to the boundary face $F_B$ are $\pi/2$, this structure has totally geodesic boundary.

Lifting the hyperbolic orbifold structure of $B$ via the quotient map $p_0: H_0\to B$ given by the action of $G$ turns $H_0$ into a hyperbolic $3$-orbifold with boundary $S_0=\partial H_0$ and $p_0$ into an orbifold covering map.  In this structure, the trivalent graph we started with divides the topological boundary of $H_0$ into pentagonal and octagonal mirrors, and the (unmirrored) subsurface $S_0$.  Faces meeting along an edge of the trivalent graph are at right angles with one another.  Thus all nontrivial isotropy groups are right-angled reflection groups of order 2, 4, or 8.
The orbifold boundary $S_0$ is totally geodesic, since it is the preimage of $F_B$ in $H_0$.
All the octagons in $Y_0$ map to a single face of $B$, denoted $F_\Sigma$ and colored in yellow in Figure~\ref{fig:prism}.

%%%%
\subsection{The structure of the  boundary \texorpdfstring{$\partial H_0$}{dH0}}\label{sec:tiling_unmirrored}

The hyperbolic $3$-orbifold structure of $H_0$ described in \S\ref{sec:hyperbolization} induces a
hyperbolic $2$-orbifold structure on $S_0= \partial H_0$, which we now describe explicitly for future reference.

Each component of $\partial H_0$ is a $k$-holed sphere with a hyperbolic metric with corners, which can be subdivided into two isometric right-angled $3k$-gons. The graph $\Theta$ is dual to this subdivision; see  Figure~\ref{fig:tiling_S0}.

Each boundary circle of $\partial H_0$ is subdivided into two geodesic arcs that are mirrors for the induced orbifold structure on $\partial H_0$ (i.e., their isotropy group is $\zz / 2\zz$).
These $1$-dimensional mirrors meet orthogonally at vertices with  isotropy group  $\zz/2\zz \times \zz/2\zz$ (marked in green in Figure~\ref{fig:tiling_S0}).
The entire topological boundary of $\partial H_0$ is mirrored, so $\partial H_0$ is a closed $2$-orbifold.

We find it convenient to call an edge of the $3k$-gonal tiling of $\partial H_0$  an \textit{interior edge} if it belongs to two $3k$-gons.
The other edges of the tiling give rise to the mirrors of $\partial H_0$: two adjacent edges from different $3k$-gons form a mirror.

 \begin{figure}[ht]
  \centering
  \def\svgwidth{.4\columnwidth}
  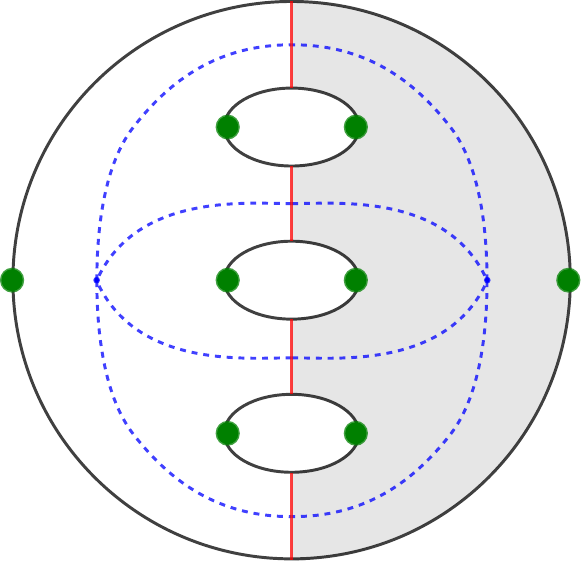
  \caption{The tiling of a component of $\partial H_0$ by two $3k$-gons for $k=4$. The dual graph $\Theta$ is represented by dotted blue arcs.
  The interior edges are marked in red.
  The green dots mark vertices with  angle $\frac \pi 2$ for the hyperbolic metric of $\partial H_0$. Unmarked vertices of the $3k$-gons have angle  $\frac \pi 2$ in each $3k$-gon, so angle $\pi$ in $\partial H_0$.}
  \label{fig:tiling_S0}
\end{figure}

\begin{choice}\label{constants}
For the rest of the paper we fix the following notation.
    \begin{enumerate}
    
    \item Let $C$ be the minimal distance between interior edges of a $3k$-gon in $\partial H_0$.    
    
    \item Let $L$ be the  length of a mirror of $\partial H_0$. Note all mirrors have the same length,  equal to twice the length of the bold edge of $B$ in Figure~\ref{fig:prism}.

    \item Let $\mu$ be the distance in $B$ from $F_B$ to the union of the two mirrored faces disjoint from $F_B$; see Figure~\ref{fig:prism}.

    \item Fix some positive $b<\mu$, and
    some $R> \frac{2\pi}{\sinh(b)}$. 
    These inequalities will allow us to apply the results from \S\ref{sec:coneoff}.
\end{enumerate}
\end{choice}
The constants $C$, $L$, and $\mu$ can be explicitly computed from the Gram matrix of $B$.  
For example, when $k=18$, we have
\[ C\approx 1.4133,\quad
L\approx 2.3619,\mbox{ and }
\mu\approx 0.069503.\]
 
% \jtodo{The next prime equal to $1$ mod $4$ is 29.  Then I get the following: $k=30: C\approx 1.4153, L\approx 2.3634, \mu\approx 0.041578$.}

\section{Finding covers with controlled geometry}\label{sec:covers}
In the last section we endowed $H_0$ with the structure of a hyperbolic $3$-orbifold with totally geodesic boundary.  In this section we construct first a topological cover $H$ of $H_0$, to which we lift the orbifold structure, and then an orbifold cover $M$ of $H$ with empty orbifold locus. (Thus $M$ is a hyperbolic \emph{manifold} with totally geodesic boundary.) See Figure~\ref{fig:coverings} for reference on the steps of the construction.
In order to guarantee that the cone-off $\coneoff M$ is negatively curved, we need to maintain control of the hyperbolic geometry of these covers at each step.

\subsection{The  orbifold \texorpdfstring{$H=H_n$}{H=Hn} covering \texorpdfstring{$H_0$}{H0}}\label{sec:topcover}
Let $p_n:H_n\to H_0$ be the topological covering map that corresponds to the covering map $T_n\to T_0$ from Definition~\ref{def:Tn}.
Note that $H_n$ is also a handlebody.
We turn $H_n$ into a hyperbolic $3$-orbifold with boundary by lifting the orbifold structure on $H_0$ that was described in \S\ref{sec:hyperbolization}.

Recall from \S\ref{sec:orbifolds} that the (orbifold) boundary $\partial H_n$ of $H_n$ is the closure of the set of points in the topological boundary of $H_n$ which have trivial isotropy group.   
The topological boundary of $H_n$ is the union of $\partial H_n$ with a surface $Y_n$ which covers the union of pairs of pants $Y_0$ and is tiled by pentagonal and octagonal mirrors.

Each component of $\partial H_n$ is the cover of some $k$-holed sphere component of $\partial H_0$,
corresponding to the covering map $\Lambda_n\to \Theta$ from \S\ref{sec:turnover groups}.
In particular, each such component is totally geodesic in $H_n$.
Recall $k\geq 18$ was fixed in Choice of constants~\ref{choice:prime}.

\begin{remark}
Each component of $\partial H_n$ is a $2$-dimensional thickening of a cover of a theta graph.  A cover of a theta-graph can be non-planar (eg $K_{3,3}$), so it is possible that these components have positive genus.
Indeed, they must: if they were planar, then the coned-off complex would be homotopy equivalent to a $3$-manifold with boundary, so its fundamental group could not contain an infinite subgroup with property (T), see \cite{FU99}.
\end{remark}

Recall from \S\ref{sec:tiling_unmirrored} that each component of $\partial H_0$
is tiled by two isometric right-angled $3k$-gons; see Figure~\ref{fig:tiling_S0}.
Lifts of these $3k$-gons   tile the components of $\partial H_n$; the dual graph to the tiling on one such component is $\Lambda_n$.
As we did for $\partial H_0$, we call \textit{interior edges} 
the edges of this tiling that belong to two $3k$-gons. Recall from Choice of constants~\ref{constants} that we denote by $C$ the minimal distance between two interior edges of a $3k$-gon. (This distance is the same for the tilings of $\partial H_0$ and $\partial H_n$.)

Moreover, the hyperbolic $2$-orbifold structure of $\partial H_0$ lifts to a hyperbolic $2$-orbifold structure on $\partial H_n$. Each mirror in $\partial H_n$ still consists of two  non-interior edges of the $3k$-gonal tiling.
\begin{definition}\label{def:systole}
    By the \emph{systole} of an orbifold we will mean the length of a shortest essential loop which misses the orbifold locus.  We write, for example, the systole of $\partial H_n$ as $\sys{\partial H_n}$.
\end{definition}

\begin{lemma}\label{lem:large systole 1}
Under the above notation, we have  
\[\sys{\partial H_n}\geq C \operatorname{girth}(\Lambda_n).\] In particular,  $\underset{n\to +\infty}{\lim} \sys{\partial H_n}=+\infty$. 
\end{lemma}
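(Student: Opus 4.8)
The plan is to bound the length of an essential loop in $\partial H_n$ below in terms of how many tiles it must cross, and to relate the number of crossings to the girth of the dual graph $\Lambda_n$. First I would fix an essential loop $\gamma$ in $\partial H_n$ of shortest length realizing $\sys{\partial H_n}$; by definition $\gamma$ misses the orbifold locus, so in particular it misses the mirrors of $\partial H_n$, which lie along the non-interior edges of the $3k$-gonal tiling. After a small homotopy (not increasing length past the systole, or using a standard transversality argument on a shortest representative) I may assume $\gamma$ meets the $1$-skeleton of the tiling transversely and only in interiors of interior edges, and that $\gamma$ crosses no edge twice in immediate succession (otherwise one could shorten or it would be inessential). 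Then $\gamma$ determines a cyclic edge-path in the dual graph $\Lambda_n$ of the tiling on that component of $\partial H_n$: each time $\gamma$ passes from one $3k$-gon to an adjacent one through an interior edge, we record the corresponding edge of $\Lambda_n$.

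The next step is to check that this edge-loop in $\Lambda_n$ is essential, i.e., not null-homotopic, and hence has combinatorial length at least $\operatorname{girth}(\Lambda_n)$. This is where the bulk of the argument sits. The point is that the tiling on $\partial H_n$ has exactly $\Lambda_n$ as its dual graph in the appropriate sense (each component of $\partial H_n$ being a thickening of $\Lambda_n$, with the two $3k$-gons playing the role of the two vertices lifted over the two vertices of $\Theta$), so that a loop in the surface is essential if and only if the dual edge-loop it traces out is essential in $\Lambda_n$ — up to the usual care about backtracking, which we have already arranged to exclude. Concretely: the component of $\partial H_n$ deformation retracts onto (a graph isomorphic to) $\Lambda_n$, and under this retraction $\gamma$ maps to the edge-loop we extracted; since $\gamma$ is essential in the surface and the retraction is a homotopy equivalence, the image is essential in $\Lambda_n$, hence has at least $\operatorname{girth}(\Lambda_n)$ edges.

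Finally I would convert the combinatorial count into a length bound. Each time $\gamma$ crosses from one $3k$-gon into the next and then out again, it traverses a sub-arc inside a single $3k$-gon running from one interior edge to another interior edge; by the Choice of constants~\ref{constants}, any such arc has length at least $C$, the minimal distance between interior edges of a $3k$-gon in $\partial H_n$ (the same constant as for $\partial H_0$, since the tilings are locally isometric). Counting one sub-arc of length $\geq C$ per edge of the dual loop and using that the dual loop has at least $\operatorname{girth}(\Lambda_n)$ edges gives
\[ \operatorname{length}(\gamma) \;\geq\; C\,\operatorname{girth}(\Lambda_n), \]
hence $\sys{\partial H_n}\geq C\,\operatorname{girth}(\Lambda_n)$. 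The statement $\lim_{n\to\infty}\sys{\partial H_n}=+\infty$ then follows immediately from part (1) of Theorem~\ref{thm:tower}, which gives $\operatorname{girth}(\Lambda_n)\to\infty$. The main obstacle is the middle step: carefully justifying that a shortest essential loop can be taken transverse to the tiling with no backtracking in the dual graph, and that essentiality in the surface transfers to essentiality (hence the girth lower bound) in $\Lambda_n$; once that is pinned down, the distance estimate via $C$ is routine.
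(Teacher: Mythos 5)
Your proposal is correct and follows essentially the same route as the paper: retract each component of $\partial H_n$ onto the dual graph $\Lambda_n$ (collapsing each $3k$-gon to a $k$-pod along its interior edges), observe that an essential loop maps to a non-backtracking essential edge-loop of combinatorial length at least $\operatorname{girth}(\Lambda_n)$, and charge each dual edge a sub-arc of length at least $C$. The only cosmetic difference is that the paper works directly with a closed geodesic (so transversality and non-backtracking are automatic), whereas you arrange these by perturbation; both yield the same bound.
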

\begin{proof}
Each $3k$-gon in  $\partial H_n$ can be deformation retracted to a $k$-pod by collapsing along its interior edges. 
This provides a retraction $r:S \to \Lambda_n$ for any component $S$ of $\partial H_n$; see Figure~\ref{fig:retraction_kpod}.

Let $\gamma$ be a  closed geodesic on $\partial H_n$ avoiding the orbifold locus.
For each $3k$-gon $X$, the intersection $\gamma \cap X$  is a  collection of geodesic arcs between interior edges of $X$; see Figure~\ref{fig:retraction_kpod}.
The length of each such arc is at least the distance between two interior edges of $X$, so at least $C$. 
Let $d_n$ be the degree of the cover $\Lambda_n\to \Theta$  from \S\ref{sec:turnover groups} and let $X_1,\dots, X_{2d_n}$ be the $3k$-gons that tile the component $S$ of $\partial H_n$ containing $\gamma$.  Let $k_i$ be the number of components of $\gamma \cap X_i$.
The image of $\gamma$ under the retraction $r:S\to \Lambda_n$ is an edge-loop in $\Lambda_n$  (possibly not embedded but without backtracking) of combinatorial length $\sum_{i=1}^{2d_n} k_i$.
Thus the length of $\gamma$ satisfies 
$$\ell(\gamma) \geq  \sum_{i=1}^{2d_n} C k_i = C \sum_{i=1}^{2d_n} k_i \geq C \operatorname{girth}(\Lambda_n).$$
The last part of the statement follows from the fact that the sequence $\Lambda_n$ was chosen in Theorem~\ref{thm:tower} so that 
$\operatorname{girth}(\Lambda_n)\to \infty$.
\end{proof}

\begin{figure}[ht]
    \centering
    \def\svgwidth{.5\columnwidth}
  %% Creator: Inkscape 1.3.2 (091e20e, 2023-11-25), www.inkscape.org
%% PDF/EPS/PS + LaTeX output extension by Johan Engelen, 2010
%% Accompanies image file 'retraction_tiling.pdf' (pdf, eps, ps)
%%
%% To include the image in your LaTeX document, write
%%   \input{<filename>.pdf_tex}
%%  instead of
%%   \includegraphics{<filename>.pdf}
%% To scale the image, write
%%   \def\svgwidth{<desired width>}
%%   \input{<filename>.pdf_tex}
%%  instead of
%%   \includegraphics[width=<desired width>]{<filename>.pdf}
%%
%% Images with a different path to the parent latex file can
%% be accessed with the `import' package (which may need to be
%% installed) using
%%   \usepackage{import}
%% in the preamble, and then including the image with
%%   \import{<path to file>}{<filename>.pdf_tex}
%% Alternatively, one can specify
%%   \graphicspath{{<path to file>/}}
%% 
%% For more information, please see info/svg-inkscape on CTAN:
%%   http://tug.ctan.org/tex-archive/info/svg-inkscape
%%
\begingroup%
  \makeatletter%
  \providecommand\color[2][]{%
    \errmessage{(Inkscape) Color is used for the text in Inkscape, but the package 'color.sty' is not loaded}%
    \renewcommand\color[2][]{}%
  }%
  \providecommand\transparent[1]{%
    \errmessage{(Inkscape) Transparency is used (non-zero) for the text in Inkscape, but the package 'transparent.sty' is not loaded}%
    \renewcommand\transparent[1]{}%
  }%
  \providecommand\rotatebox[2]{#2}%
  \newcommand*\fsize{\dimexpr\f@size pt\relax}%
  \newcommand*\lineheight[1]{\fontsize{\fsize}{#1\fsize}\selectfont}%
  \ifx\svgwidth\undefined%
    \setlength{\unitlength}{291.11978846bp}%
    \ifx\svgscale\undefined%
      \relax%
    \else%
      \setlength{\unitlength}{\unitlength * \real{\svgscale}}%
    \fi%
  \else%
    \setlength{\unitlength}{\svgwidth}%
  \fi%
  \global\let\svgwidth\undefined%
  \global\let\svgscale\undefined%
  \makeatother%
  \begin{picture}(1,1.00693238)%
    \lineheight{1}%
    \setlength\tabcolsep{0pt}%
    \put(0,0){\includegraphics[width=\unitlength,page=1]{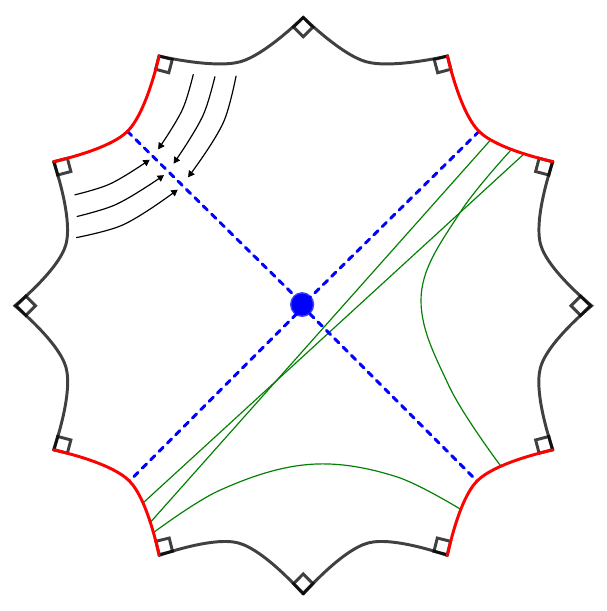}}%
    \put(0.61558975,0.26507415){\color[rgb]{0,0.50196078,0}\transparent{0.99704099}\makebox(0,0)[t]{\lineheight{1.25}\smash{\begin{tabular}[t]{c}$\gamma$\end{tabular}}}}%
    \put(0.42370482,0.74520946){\color[rgb]{0,0,0}\makebox(0,0)[t]{\lineheight{1.25}\smash{\begin{tabular}[t]{c}$r$\end{tabular}}}}%
  \end{picture}%
\endgroup%

    \caption{A $3k$-gon $X$ in the tiling of a component $S$ of $\partial H_n$, for $k=4$, with the interior edges in red, the $k$-pod to which $X$ retracts dotted in blue, the retraction $r$ sketched with black arrows, and the intersection with a geodesic $\gamma$ in green.}
    \label{fig:retraction_kpod}
\end{figure}

%%%%%%%%%%%
For the rest of the paper we make the following choice, which is possible by Lemma~\ref{lem:large systole 1}. Recall that $R$ and $L$ are defined in Choice of constants~\ref{constants}.
\begin{assumption}\label{choice}
We fix $n\geq 1$ so that Lemma~\ref{lem:turnover (T)} applies and so that the covering  $p:H=H_n\to H_0$ so that $\sys{\partial H_n}>R + L$.
For the sake of readability, we will suppress $n$ from the notation for the rest of the paper.
\end{assumption}

\begin{remark}\label{constants_for_covers}
Since $p:H\to H_0$ is a topological cover (not an orbifold cover), all the mirrors in the topological boundary of $S$ are homeomorphic lifts of the ones in $\partial H_0$, so they have the same length, which is the value $L$ from Choice of constants~\ref{constants}.
\end{remark}

%%%%%%%%%%%%%%%%%%%%
\subsection{The Coxeter word of an orbifold path}\label{sec:orbifold_coxeter}
In this section we consider the orbifold with boundary $H$ that was fixed in Assumption~\ref{choice} and define a map from the orbifold fundamental group of $H$ to the Coxeter group associated with the mirrors of $H$.

Recall that the orbifold structure of $H$ is given by a finite collection of pentagonal and octagonal mirrors in the topological boundary of $H$, such that any two of them are either disjoint or meet orthogonally along an edge.
We consider the right-angled Coxeter group $W$ with generating set
$T=\{t_1,\dots, t_p\}$ equal to the set of mirrors, and in which two generators commute if and only if they intersect.

Fix a basepoint for $\piorb{H}$  in  $\partial H$ that is not contained in a mirror.
We have a natural map $f:\piorb{H} \to W$, obtained by coning off the complement of a neighborhood of the union of the mirrors of $H$ and regarding the quotient as a complex of groups.
This is a kind of ``forgetful'' map, which remembers only the sequence of mirrors which an orbifold loop passes through, and loses all information about the ordinary fundamental group $\pi_1(H)$.
We have a short exact sequence (see   \cite[Example III.$\mathcal{G}$.3.9(1)]{BH99}):
\begin{equation}\label{eq:ses}
  1 \to \llangle \pi_1(H) \rrangle \to \piorb{H}\stackrel{f}{\longrightarrow} W \to 1.
\end{equation}
% Since the topological cover $p:H\to H_0$ sends mirrors to mirrors, it induces a map on the Coxeter groups $p:W\to W_0$.

Here is a more combinatorial description of the relation between $\piorb H$ and $W$ that will be useful in the following.
Recall that we have fixed the generating set $T=\{t_1,\dots, t_p\}$ for $W$.
Let $T^*$ be the set of words in the alphabet $T$, and let $q:T^*\to W$ be the natural quotient map associated with the standard RACG presentation of $W$.

\begin{definition}[Coxeter word]\label{def:coxeter word}
Let $\gamma \from I \to H$ be a smooth orbifold path in $H$ (not necessarily geodesic) such that $\gamma$ meets every mirror transversely, and misses any intersection of multiple mirrors.
The \emph{Coxeter word} $\cox\gamma$ for such a path is the word $t_{j_1}\dots t_{j_m} \in T^*$, where $\gamma$ intersects transversely  the mirrors associated to $t_{j_i}$, in the given order.  If $\cox\gamma$ has minimal length amongst words representing $q(\cox\gamma)$ in $W$, then we say that $\cox \gamma$ is \emph{reduced}.

Any smooth orbifold path $\gamma$ has arbitrarily small $C^1$ perturbations (possibly moving the endpoints) which are of the above type.  We say that $w$ is \emph{a Coxeter word for} $\gamma$ if it has minimal length among those words which appear as Coxeter words for arbitrarily small deformations of $\gamma$. 
\end{definition}

Here by ``transversely'' we mean the following: given an elevation $\widetilde \gamma$ of $\gamma$ to the orbifold universal cover of $H$  (which embeds in $\hh^3$), the lift $\widetilde \gamma$ transversely crosses each elevation of a mirror it meets.  In particular neither endpoint of $\gamma$ is on a mirror.

The map $f:\piorb{H} \to W$ is then obtained as follows: take $[\gamma]\in \piorb{H} $, represent it as a orbifold loop $\gamma$,  then set $f([\gamma])=q(\cox \gamma)$, where we have defined 
$T^*$ to be the set of words in the alphabet $T$ and $q:T^*\to W$ to be the natural quotient map.
Note that  $\cox \gamma$ is the empty word exactly when $\gamma$ admits arbitrarily small perturbations that do not cross any mirror, and in this case $[\gamma]\in \pi_1(H)$.

Arcs which are orbifold homotopic rel endpoints can have different Coxeter words, but these words all represent the same element of $W$ under the map $q$.
For example, if $\gamma$ transversely intersects two mirrors at an intersection point of the two mirrors, then there are two small perturbations in the Coxeter words of which the two mirrors appear in different order.
However, in the orbifold structure on $H$, two mirrors intersect if and only if they intersect orthogonally along an edge, so the associated generators of $W$ commute.
The two Coxeter words corresponding to the two different choices will differ by a swap of two commuting generators, so they represent the same element of $W$ and have the same word length.

\subsection{The manifold \texorpdfstring{$M$}{M} orbifold covering \texorpdfstring{$H$}{H}}\label{sec:manifold cover}
In the prequel, we have constructed a mirrored handlebody $H$ such that coning off the boundary $\partial H$ gives a $3$-dimen\-sion\-al complex $\coneoff{H}$ which deformation retracts to one of the triangle complexes $\coneoff{T}_n$ described in Lemma~\ref{lem:turnover (T)}.

We want to construct a manifold $M$ (with empty orbifold locus) that is an orbifold cover of the orbifold $H$, so that the underlying handlebody $H$ embeds into $M$ in a nice way.  
Any such orbifold cover is defined by a finite index subgroup $\Gamma\leq \piorb H$, and inherits a hyperbolic structure by lifting.
We want to find one that satisfies certain geometric conditions on the lifted hyperbolic structure, needed to apply the results from \cite{KM} about the existence of negatively curved metrics on the cone-off, see \S\ref{sec:coneoff}. 
Specifically, we must bound on the injectivity radius of the boundary $\partial M$ from below.  (We warn the reader that when taking an orbifold cover $M\to H$, the systole of $\partial M$ could be smaller than the systole of $\partial H$ as defined in Definition~\ref{def:systole}.)
To achieve our lower bound, we will exploit the residual finiteness of $W$ to carefully choose a finite-index torsion-free subgroup $K$ of $W$ and let $M$ be the orbifold cover of $H$ corresponding to $\Gamma = f^{-1}(K)$, where the Coxeter group $W$ and the map $f:\piorb{H} \to W$ were defined in \S\ref{sec:orbifold_coxeter}.

We start by proving some estimates on the length of certain orbifold geodesic paths in $\partial H$.
Recall  from Choice of constants~\ref{constants} that $L$ is the  length of a mirror in $\partial H_0$ (or equivalently in $\partial H$ by Remark~\ref{constants_for_covers}), and that in Assumption~\ref{choice}  we fixed $H$ so that $\sys{\partial H}>R + L$, where $R$ was also defined in Choice of constants~\ref{constants}.
Moreover, recall from \S\ref{sec:tiling_unmirrored} that $\partial H_0$ has a natural structure of hyperbolic $2$-orbifold, in which every boundary component is subdivided into geodesic arcs, which are mirrored and meet at right angles.
This structure was lifted to $\partial H$ in \S\ref{sec:topcover}.
The Coxeter word of a smooth orbifold path in $H$ was defined in Definition~\ref{def:coxeter word}.

%%%%%%%%%
The next lemmas give information about orbifold geodesics in $\partial H$.

\begin{lemma}\label{lem:unreduced word}
    Let  $S$ be a  component of $\partial H$ and let $\gamma:I\to S$ be an orbifold geodesic path.
    If the Coxeter word $\cox \gamma$ is not reduced, then $\ell (\gamma)>R$.
\end{lemma}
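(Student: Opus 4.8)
The plan is to extract from the non-reducedness of $\cox\gamma$ a sub-arc of $\gamma$ that almost closes up along a single mirror of $\partial H$, and then bound its length using the large systole fixed in Assumption~\ref{choice}. Throughout I may assume $\gamma$ is of the type in Definition~\ref{def:coxeter word}, i.e.\ transverse to every mirror and missing every multiple intersection of mirrors (a general orbifold geodesic is reduced to this case by a $C^1$-small perturbation with the same Coxeter word).

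First I would use the combinatorics of the right-angled Coxeter group $W$. Write $\cox\gamma = t_{j_1}\cdots t_{j_m}$. Since $W$ is right-angled, a word fails to be reduced exactly when it contains a \emph{cancelling pair}: indices $1\le a<b\le m$ with $t_{j_a}=t_{j_b}$ and $t_{j_c}$ commuting with $t_{j_a}$ for every $a<c<b$ (this is the deletion condition, using that the only defining relations in a RACG are commutations). Let $t=t_{j_a}$ and let $\mu\subset \partial H$ be the mirror of $\partial H$ whose crossings $t$ records; recall $\mu$ is an embedded geodesic arc of length $L$ (by Choice of constants~\ref{constants} and Remark~\ref{constants_for_covers}) whose endpoints are the corner points where two mirrors of $\partial H$ meet. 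Let $\gamma'\subseteq\gamma$ be the sub-arc between the $a$-th and $b$-th mirror crossings; its endpoints $x_0,x_1$ lie on $\mu$, and since $\gamma$ misses every multiple intersection of mirrors they lie in the \emph{interior} of $\mu$. Pick the sub-arc $\alpha\subseteq\mu$ from $x_0$ to $x_1$, so $\ell(\alpha)\le L$, and set $\delta=\gamma'*\bar\alpha$, a loop in $S$ with $\ell(\delta)=\ell(\gamma')+\ell(\alpha)\le \ell(\gamma)+L$.

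Next I would show $\delta$ is essential in $\piorb{S}$. Lift $\gamma'$ to the orbifold universal cover $\hh^2$ of the closed hyperbolic $2$-orbifold $S$: it becomes a geodesic segment $\widetilde{\gamma'}$ from a point $\widetilde{x_0}$ on a lift $\widetilde\mu_0$ of $\mu$ to a point $\widetilde{x_1}$ on a lift $\widetilde\mu_1$ of $\mu$, meeting both transversely at its endpoints. The segment is nondegenerate (it crosses $\mu$ at least twice), and $\widetilde\mu_0\ne\widetilde\mu_1$, for otherwise $\widetilde{\gamma'}$ would cross a single geodesic line twice. Since $\mu$ is embedded and $x_0,x_1$ are interior to it, two distinct lifts of $\mu$ can meet only at lifts of the endpoints of $\mu$; hence $\widetilde{x_0}$, being an interior point of $\widetilde\mu_0$, does not lie on $\widetilde\mu_1$. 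If $\delta$ were null-homotopic, its lift starting at $\widetilde{x_0}$ would be a loop; but that lift first follows $\widetilde{\gamma'}$ to $\widetilde{x_1}\in\widetilde\mu_1$ and then follows a lift of $\bar\alpha$, which remains inside $\widetilde\mu_1$ because $\alpha\subseteq\mu$, so it terminates on $\widetilde\mu_1$ and cannot return to $\widetilde{x_0}\notin\widetilde\mu_1$. Thus $[\delta]\ne 1$ in $\piorb{S}$.

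Finally I would invoke the systole. For each $\varepsilon>0$, pushing $\delta$ slightly off the ($1$-dimensional) orbifold locus yields a loop $\delta_\varepsilon$ in $\partial H$ missing the orbifold locus, homotopic to $\delta$ (hence still essential) with $\ell(\delta_\varepsilon)\le\ell(\delta)+\varepsilon$. So $\ell(\delta)+\varepsilon\ge\ell(\delta_\varepsilon)\ge\sys{\partial H}$ for all $\varepsilon>0$, giving $\ell(\delta)\ge\sys{\partial H}>R+L$ by Assumption~\ref{choice}. Therefore
\[ \ell(\gamma)\ge\ell(\gamma')=\ell(\delta)-\ell(\alpha)\ge\ell(\delta)-L>R, \]
which is the claim. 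The step I expect to be the main obstacle is the essentiality of $\delta$: one must rule out that this near-loop bounds, and this is precisely where the geometry of $\partial H$ — that its mirrors are embedded arcs meeting only at isolated corner points through which $\gamma$ is assumed not to pass — is doing the real work; the remaining steps are bookkeeping with the RACG relations and with the systole bound from Assumption~\ref{choice}.
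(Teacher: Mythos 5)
Your proof is correct and follows essentially the same route as the paper's: extract a repeated letter $t\omega t$ from the non-reduced Coxeter word via the deletion condition, close up the sub-arc of $\gamma$ between the two crossings of the mirror $t$ by a shortcut of length at most $L$ along that mirror, verify the resulting loop is essential, and apply $\sys{\partial H}>R+L$. The only cosmetic difference is that you certify essentiality by lifting to $\hh^2$ and tracking components of the preimage of the mirror, where the paper argues directly that the geodesic sub-arc, not being contained in the geodesic mirror, cannot be homotoped rel endpoints into it.
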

\begin{proof}
    Since $\cox \gamma$ is not reduced, it is not the empty word and there is a subword of the form $t\omega t$, where $\omega$ is a (possibly empty) word in the only two generators $t_i,t_j\neq t$ that commute with $t$, see \cite[\S 3.4]{DA08}.
    (Note that each topological boundary component of $S$ consists of at least $6$ mirrors, by Assumption~\ref{choice} and Lemma~\ref{lem:turnover (T)}.)
    This means that there is a subarc $\alpha$ of $\gamma$ which starts and ends on the mirror $t$ so that any sufficiently small geodesic extension $\alpha_\epsilon$ of $\alpha$ satisfies $\cox{\alpha_\epsilon} = t\omega t$.
    (Here $\alpha_\epsilon$ is the geodesic arc obtained by extending $\alpha$ geodesically at each end by $\epsilon/2$.)  
      
    Now note that the arc $\alpha$ is not entirely contained in $t$, since otherwise $\alpha_\epsilon$ would have arbitrarily small $C^1$ perturbations missing $t$, and so $t$ would not appear in its Coxeter word.
    Moreover, since $\alpha$ and the mirror $t$ are both geodesic, $\alpha$ is not homotopic relative to its endpoints to a path in $t$.
    In particular, if we introduce a shortcut $\beta$ between the endpoints of $\alpha$ along $t$, then we obtain an essential closed curve in $S$, whose length is necessarily larger than $\sys{\partial H}$.
    Note that the length of $\beta$ is at most the length of the mirror $t$, which is exactly $L$.
    Then we have 
    $$ \ell (\gamma) + L \geq \ell (\alpha) + L \geq \ell (\alpha) + \ell(\beta)\geq \sys{\partial H}.$$
    Since we choose $p:H\to H_0$ in Assumption~\ref{choice} so that $\sys{\partial H}>R+L$  we then get that
    $$\ell(\gamma) \geq \sys{\partial H} - L>R.$$  
  \end{proof}

\begin{choice}\label{choice:D}
    Let $D$ be the minimal distance between non adjacent mirrors of $\partial H$ within a fixed component of $\partial H$.  
\end{choice}

We also denote by $\wlength g$ the word length of an element $g\in W$ with respect to generating set $T=\{t_1,\dots,t_p\}$ for $W$.

\begin{lemma}\label{lem:reduced word}
    Let  $S$ be a  component of $\partial H$ and let $\gamma:I\to S$ be an orbifold geodesic path.
    If the Coxeter word $\cox \gamma$ is reduced and $\wlength {\cox \gamma}\geq 3$, then $ \ell(\gamma) \geq  \left\lfloor{\frac{\wlength{\cox \gamma}}{3} } \right\rfloor D$.
\end{lemma}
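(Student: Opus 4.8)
The plan is to show that a reduced Coxeter word of word length at least $3$ forces $\gamma$ to travel between non-adjacent mirrors at least $\lfloor \wlength{\cox\gamma}/3 \rfloor$ times, each such passage contributing at least the constant $D$ from Choice of constants~\ref{choice:D}. First I would write $\cox\gamma = t_{j_1}\cdots t_{j_m}$ with $m = \wlength{\cox\gamma}$; since this word is reduced, $m$ equals the word length of $q(\cox\gamma)$ in $W$. The key combinatorial fact is that in a sequence of mirrors read off along $\gamma$, one cannot have three consecutive mirrors $t_{j_i}, t_{j_{i+1}}, t_{j_{i+2}}$ that are pairwise adjacent: if all three pairwise commuted (i.e. pairwise intersected orthogonally along an edge), then the three mirrors would meet at a common vertex, and by the orbifold structure of $H$ (where such a vertex has a spherical-triangle or dihedral reflection group), this would either contradict reducedness after a perturbation or else the triple does not actually occur transversely. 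More carefully: since $\gamma$ is geodesic and lies in a single boundary component $S$, and since in $\partial H$ mirrors are geodesic arcs meeting at right angles, a geodesic cannot cross two mutually adjacent mirrors consecutively without passing through their common endpoint — and generic perturbation plus reducedness rules this out. Hence among any three consecutive letters $t_{j_i}, t_{j_{i+1}}, t_{j_{i+2}}$, at least one consecutive pair is non-adjacent.

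With that in hand, I would partition the $m$ crossings into $\lfloor m/3 \rfloor$ disjoint blocks of three consecutive crossings each. In each block the argument above produces a pair of consecutive crossings of non-adjacent mirrors; the subarc of $\gamma$ between these two crossing points therefore runs from one mirror to a non-adjacent mirror inside $S$, and so has length at least $D$ by Choice~\ref{choice:D}. Since the $\lfloor m/3\rfloor$ blocks use disjoint sets of crossing points, the corresponding subarcs of $\gamma$ are pairwise disjoint (they occur in disjoint parameter intervals of $I$), so their lengths add up: $\ell(\gamma) \geq \lfloor m/3 \rfloor D$, which is exactly the claimed bound.

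The main obstacle — the step I expect to require the most care — is the combinatorial claim that no three consecutive mirrors crossed by a reduced orbifold geodesic can be pairwise adjacent. This needs the geometry of the mirror pattern on $\partial H$ from \S\ref{sec:tiling_unmirrored}: the mirrors of $\partial H$ come from the $3k$-gonal tiling, meeting orthogonally, and one should check that a geodesic arc in $S$ cannot cross three mirrors in a row that all pairwise intersect. One way to handle this cleanly is to lift to the orbifold universal cover, which embeds in $\hh^2$ (the totally geodesic boundary sits inside $\hh^3$), where mirrors lift to a collection of geodesics; a geodesic crossing three mutually intersecting lifted mirrors would have to cross two of them that bound a right-angled corner, and since the geodesic is not the mirror itself, reducedness (via the subword criterion from \cite[\S3.4]{DA08}, already used in Lemma~\ref{lem:unreduced word}) is contradicted. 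A subtlety is that ``adjacent'' in Choice~\ref{choice:D} means the mirrors actually share an edge in $\partial H$, which by the orbifold structure is equivalent to the corresponding generators of $W$ commuting; one must make sure the combinatorics of $W$ and the geometry of $\partial H$ line up here, but this is exactly the correspondence set up in \S\ref{sec:orbifold_coxeter}. Once the no-three-adjacent claim is established, the length estimate and the disjointness bookkeeping are routine.
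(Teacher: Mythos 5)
Your overall strategy --- cut $\cox\gamma$ into $\lfloor m/3\rfloor$ disjoint blocks of three consecutive crossings, extract from each block a pair of crossings of non-adjacent mirrors at distance at least $D$, and sum over the disjoint parameter intervals --- is exactly the paper's. The gap is in how you extract that pair. Your intermediate assertion that ``a geodesic cannot cross two mutually adjacent mirrors consecutively without passing through their common endpoint'' is false: two mirrors meeting at a right angle are crossed consecutively by any geodesic that cuts the corner, and $t_it_j$ with $t_i\neq t_j$ commuting is a perfectly reduced word of length two, so neither transversality nor reducedness forbids this. Consequently your stated claim --- that among any three consecutive letters at least one \emph{consecutive} pair is non-adjacent --- is both unproven and stronger than what the situation gives you: ruling out three pairwise-adjacent mirrors still allows $t_{j_i}$ adjacent to $t_{j_{i+1}}$ and $t_{j_{i+1}}$ adjacent to $t_{j_{i+2}}$ with only the outer pair disjoint, in which case neither consecutive pair yields the estimate.

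What the paper actually uses is the algebraic reduced-word criterion for RACGs (\cite[\S 3.4]{DA08}, as already invoked in Lemma~\ref{lem:unreduced word}): every length-3 subword $t_it_jt_k$ of a reduced word either has $t_i=t_k$ not commuting with $t_j$ (so the pair $(t_i,t_j)$ is non-adjacent), or has $i,j,k$ pairwise distinct, in which case some pair among the three mirrors --- possibly the non-consecutive one --- is disjoint in $\partial H$. Either way the subarc of $\gamma$ spanning the block runs between two disjoint mirrors and has length at least $D$, and the block-disjointness bookkeeping you describe then finishes the proof. Your argument also silently skips the repeated-letter case $t_{j_i}=t_{j_{i+2}}$, and your appeal to spherical triangle isotropy groups concerns the $3$-orbifold $H$ rather than the $2$-orbifold $\partial H$ in which $\gamma$ lives. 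Replacing the false geometric claim by the subword criterion, and asking only for \emph{some} disjoint pair in each block rather than a consecutive one, repairs the proof.
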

\begin{proof}
    Since $\cox \gamma$ is reduced, for any subword of $\cox \gamma$ of the form $t_it_jt_k$ we have that either $i,j,k$ are all distinct, or $t_i=t_k$ does not commute with $t_j$, see \cite[\S 3.4]{DA08}.
    In the first case, at least two of the mirrors are disjoint, so we get a contribution of $D$ to the length of $\gamma$.
    In the second case $t_j$ is not adjacent to $t_i=t_k$, hence we get contribution of $D$ in this case as well.
    Since we get a contribution of at least $D$ to the length of $\gamma$ from each subword of length $3$ of $\cox \gamma$, it follows that $ \ell(\gamma) \geq  \left\lfloor{\frac{\wlength{\cox \gamma}}{3} } \right\rfloor D$. 
\end{proof}

\begin{proposition}\label{prop:manifold}
    There exists a finite-index torsion-free normal subgroup $K\lhd W$ such that  the  orbifold cover $p_M: M\to H$ corresponding to $\Gamma = f^{-1}(K)$ satisfies the following.
    \begin{enumerate}
        \item \label{item:manifold} $M$ has empty orbifold locus (in particular $M$ is a compact hyperbolic $3$-manifold, whose totally geodesic boundary is tiled by copies of the components of $\partial H$).
        
        \item \label{item:lift} $H$ lifts to $M$ (as a manifold).   
        \item \label{item:systole} If $\widetilde \gamma:I\to \partial M$ is a closed geodesic, then  $\ell(\widetilde \gamma)> R$.

        \item \label{item:arcs} If $\widetilde \gamma:I\to \partial M$ is a geodesic arc intersecting some lift $\widetilde H$ of $H$ only in its endpoints, then  $\ell(\widetilde \gamma) \ge R$.
    \end{enumerate}
\end{proposition}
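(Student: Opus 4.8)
The plan is to build $K$ as an intersection of suitable finite-index subgroups of $W$, one for each ``bad'' behavior we must rule out, using residual finiteness of $W$ (which holds since $W$ is a f.g. linear group, or simply since RACGs are residually finite). First I would record that a geodesic arc $\widetilde\gamma$ in $\partial M$ projects to an orbifold geodesic path $\gamma$ in $\partial H$ whose Coxeter word $\cox\gamma$ satisfies $q(\cox\gamma)=f_*$ of the corresponding element, and that when $\widetilde\gamma$ is a loop (resp.\ an arc running between two lifts of $H$), the projected Coxeter word is a cyclically reduced word representing a nontrivial element of $W$ (resp.\ a reduced word representing a nontrivial element of $W$) — nontriviality is exactly the statement that $\widetilde\gamma$ closes up in $M$ but not in $H$, i.e.\ $f_*$ of the deck element is killed in $W/K=K\backslash W$, together with the fact that $M$ has no orbifold locus so $\widetilde\gamma$ genuinely crosses mirrors in the sense needed. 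The key quantitative input is the pair of Lemmas~\ref{lem:unreduced word} and~\ref{lem:reduced word}: a geodesic path in $\partial H$ with a non-reduced Coxeter word has length $>R$ automatically, and one with a reduced Coxeter word of $W$-length $\geq 3$ has length $\geq \lfloor \wlength{\cox\gamma}/3\rfloor D$. So the only arcs that could be short are those whose Coxeter word, as an element of $W$, has word length $0$, $1$, or $2$.

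The heart of the argument is therefore to choose $K$ so that no element of $W$ of word length $\leq 2$ (other than the identity) survives in the relevant quotients — more precisely, so that any geodesic arc in $\partial M$ that is either a loop or runs between two lifts of $H$ has a well-defined $W$-image of word length at least $3$, unless it is already forced to have length $>R$ by being non-reduced. Concretely I would proceed as follows. There are finitely many elements $w\in W$ with $\wlength w\leq 2$. For each such $w\neq 1$, residual finiteness gives a finite-index normal subgroup $K_w\lhd W$ with $w\notin K_w$. More is needed, though: I must also prevent a short arc from projecting to something whose Coxeter word has small length. The cleanest route is to observe that if $\widetilde\gamma\subset\partial M$ is a closed geodesic or an $H$-to-$H$ geodesic arc of length $\le R$, then its projection $\gamma\subset\partial H$ has length $\le R$ as well, hence (by Lemma~\ref{lem:reduced word} and Lemma~\ref{lem:unreduced word}) $\cox\gamma$ is reduced of $W$-length at most $2$ — say $\cox\gamma$ represents $w\in W$ with $\wlength w\le 2$. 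For a closed geodesic, $\widetilde\gamma$ lifts a loop in $\partial H$ that is inessential in $\partial M$; the corresponding element of $\piorb H$ lies in $\Gamma=f^{-1}(K)$, so its $W$-image $w$ lies in $K$. Choosing $K\le \bigcap_{1\ne w,\ \wlength w\le 2}K_w$ forces $w=1$, i.e.\ $\cox\gamma$ represents the identity; but a reduced word representing $1$ is empty, so $\gamma$ crosses no mirror, meaning the loop it lifts is already a loop in $\partial H$ missing the mirror locus — a contradiction with $\sys{\partial H}>R+L>R$ once we know $\gamma$ is essential in $\partial H$ (and it is essential, since $\widetilde\gamma$ is essential in $\partial M$ and $\partial M\to\partial H$ is a covering map of surfaces, so inessential downstairs would make $\widetilde\gamma$ inessential upstairs). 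The arc case in~\eqref{item:arcs} is analogous: here $\widetilde\gamma$ runs between two lifts $\widetilde H$ of $H$, the endpoints map to the basepoint-region of $H$, and the arc determines an element of $\piorb H$ representing a nontrivial double coset; the same choice of $K$ forces $\cox\gamma$ to be the empty word, whence $\gamma$ misses the mirror locus of $\partial H$, so it lifts to an arc in $\partial H$ between two lifts of $\partial H\cap H$ — but such an arc in a single component of $\partial H$ joining the $1$-dimensional ``$\Theta$-graph'' locus to itself in a nontrivial way would again have length $\ge\sys{\partial H}>R$ or else be trivial, contradicting that $\widetilde H$ and $\widetilde H'$ are distinct lifts.

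Finally, I would handle~\eqref{item:manifold} and~\eqref{item:lift} at the outset: since $W$ is residually finite and virtually torsion-free, we may additionally require $K$ to be torsion-free (intersect with Selberg's lemma giving a torsion-free finite-index normal subgroup), so that $\Gamma=f^{-1}(K)$ is torsion-free by the short exact sequence~\eqref{eq:ses} — the kernel $\llangle\pi_1(H)\rrangle$ is free hence torsion-free, and a torsion element of $\Gamma$ would map to a torsion element of $K$, so $\Gamma$ is torsion-free and $M$ has empty orbifold locus; it is then a compact hyperbolic $3$-manifold with totally geodesic boundary, and its boundary is tiled by copies of the components of $\partial H$ because $\partial M\to\partial H$ is an (ordinary) covering of the underlying surfaces. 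For~\eqref{item:lift}, note $H\to H_0$ is a topological cover, so $H$ has no orbifold locus except along its mirror faces $Y_n$; requiring $K\cap f(\pi_1(H)\text{-stuff})$ appropriately — more precisely, since $f|_{\pi_1(H)}$ is trivial (the ordinary fundamental group of $H$ lies in $\ker f$), the preimage $\Gamma=f^{-1}(K)$ contains $\llangle\pi_1(H)\rrangle\supseteq\pi_1(H)$, which is exactly the condition that $H$ lifts to the cover $M$. The main obstacle I anticipate is the bookkeeping in the arc case~\eqref{item:arcs}: one must be careful about what ``intersects some lift $\widetilde H$ only in its endpoints'' means at the level of Coxeter words and double cosets, and about the basepoint conventions, to legitimately conclude that the relevant $W$-element has word length $\ge 3$ or else the arc is ``trivial'' in the sense that forces length $>\sys{\partial H}>R$. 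Everything else is a routine assembly of residual finiteness with the two length lemmas.
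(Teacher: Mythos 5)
Your overall strategy matches the paper's: use the two length lemmas to reduce everything to Coxeter words that are reduced and ``short'' in $W$, then use residual finiteness of $W$ to choose a torsion-free finite-index normal $K$ avoiding a finite set of bad elements, with items \eqref{item:manifold} and \eqref{item:lift} following from torsion-freeness of $K$ and from $\pi_1(H)\le\ker f\le\Gamma$. However, there is a genuine quantitative error in your reduction. You claim that ``the only arcs that could be short are those whose Coxeter word, as an element of $W$, has word length $0$, $1$, or $2$.'' That does not follow from Lemma~\ref{lem:reduced word}: a reduced word of $W$-length $m\ge 3$ only gives $\ell(\gamma)\ge\lfloor m/3\rfloor D$, and $D$ is a fixed small constant of the tiling while $R>2\pi/\sinh(b)$ with $b<\mu\approx 0.0695$, so $R$ is on the order of $90$ or more. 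A reduced Coxeter word of length, say, $6$ representing a nontrivial element of $K$ would only force $\ell(\gamma)\ge 2D$, which can be far less than $R$; such a geodesic defeats your choice of $K$. The correct threshold is the one the paper uses: one must exclude from $K$ all nontrivial elements $g$ with $\wlength{g}\le 3\frac{R}{D}+3$, so that any surviving reduced word has $\lfloor\wlength{\cox\gamma}/3\rfloor D>R$. This set is still finite, so your residual-finiteness mechanism repairs the proof without further changes, but as written the argument fails.

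A secondary, more minor point: in the empty-word case for closed geodesics you detour through essentiality of $\gamma$ in $\partial H$; this is unnecessary, since a closed geodesic missing the mirror locus has length at least $\sys{\partial H}>R+L>R$ by Definition~\ref{def:systole} directly. Your treatment of the arc case \eqref{item:arcs} is sketchier than the paper's (which carefully extends the arc by $\epsilon$ at each end and closes it up with an arc inside $\widetilde H\cap\partial M$ contributing nothing to the Coxeter word), and you correctly flag this as the place needing bookkeeping; once the word-length threshold above is corrected, that bookkeeping goes through as in the paper.
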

\begin{proof}
The properties \eqref{item:manifold} and \eqref{item:lift} hold for any  finite-index $K\lhd W$, for example the commutator subgroup of $W$.
Indeed, if $K$ is torsion-free, we have that $\Gamma= f^{-1}(K)$ does not contain any orbifold loop, so $M$ has empty orbifold locus, i.e., it is a manifold.
Moreover, the handlebody $H$ lifts to the cover $M$, since $\pi_1(H) \leq \ker(f) \leq \Gamma = f^{-1}(K)$.

To construct a subgroup $K$ such that the associated cover also satisfies  conditions \eqref{item:systole} and \eqref{item:arcs}, we consider the set \[A=\left\{ g \in W\ \left|\ 0<\wlength{g} \leq 3 \frac RD + 3 \right.\right\},\]
where $R$ and $D$ were introduced in Choice of constants \ref{constants} and \ref{choice:D} respectively.
Since $W$ is finitely generated, $A$ is finite.
Since $W$ is residually finite, we can find a finite-index $K\lhd W$ such that $K\cap A =\{1\}$.
Since $W$ is virtually torsion-free, we can also assume $K$ is torsion-free.
Let $p_M: M\to H$ be the orbifold cover corresponding to $f^{-1}(K)$.

% 3
Let $\widetilde \gamma:I\to \partial M$ be a closed geodesic in $\partial M$ and let $\gamma = p_M \circ \widetilde \gamma$ be its projection to $\partial H$.  
Then $\gamma$ is an orbifold geodesic with image in  some component $S\subseteq \partial H$.
We may assume either that the basepoint of $\gamma$ is not on a mirror, or that it is the midpoint of a mirror and $\gamma$ is entirely contained in that mirror.

If $\cox \gamma$ is the empty word, then $\gamma$ is a closed geodesic in $S$ that avoids all mirrors, and therefore  $\ell(\widetilde \gamma)= \ell (\gamma)\geq \sys{\partial H}>R$.
So, let us assume that $\cox \gamma$ is not the empty word.
If $\cox \gamma$ is not reduced, then   $\ell(\widetilde\gamma)=\ell (\gamma)>R$ by Lemma~\ref{lem:unreduced word}.  

So, we now assume that $\cox \gamma$ is reduced.
The word $\cox\gamma$ represents an element $g$ of $K$ by normality.  Since $\cox\gamma$ is reduced, $g$ is not the identity, so $\wlength g>3\frac{R}{D} + 3$.  Lemma~\ref{lem:reduced word} gives us 
$$\ell(\widetilde \gamma) = \ell(\gamma) \geq D \left\lfloor{\frac{\wlength{\cox \gamma}}{3}} \right\rfloor > D\left(\frac{\wlength{\cox \gamma}}{3} -1\right)  > D \left(\frac RD+1-1\right) =R.$$
In all cases, we get $\ell(\widetilde \gamma)>R$, so we proved \eqref{item:systole}.

% Let $\gamma _0 = \pi \circ \widetilde \gamma _0$ and $\eta=\pi \circ \widetilde\eta_0$.
% Note that $\widetilde \eta_0$ may cross elevation of mirrors, so $\cox{ \gamma_0}$ may be different from $\cox \gamma$.
% To fix this, let $\eta_1$ be an arc connecting $\gamma(0)=\gamma(1)$ to the fixed basepoint for $\piorb{H}$ and avoiding mirrors, which is possible because mirrors of $H$ are contained in its topological boundary.
% Then $\gamma_1 = \eta_1\cdot \gamma \cdot \eta_1^{-1}$ is a based loop  representing an element of $\piorb{H}$ in the conjugacy class determined by $\gamma _0$ such that $\cox \gamma=\cox{\gamma_1}$. 
% Since $\Gamma$ is normal in $\piorb{H}$ and $\gamma_0$ lifts by construction (i.e., $\gamma_0 \in \Gamma$), we have that $\gamma_1 \in \Gamma = \pi_1 (M)$.

% Recall that $\cox \gamma$ is reduced, and let $p=\wlength{\cox \gamma}$.
% Since $\widetilde \gamma$ is closed, we have that $p$ is even.
% Note that $\cox \gamma$ is a word in $T$ representing the element $f(\gamma_1)$.
% So, since $\gamma_1\in \Gamma=f^{-1}(K)$, by the choice of $K$, we have $p >3 \frac RD +3$.
% Then we can apply Lemma~\ref{lem:reduced word} to obtain
% $$\ell(\widetilde \gamma) = \ell(\gamma) \geq D \left\lfloor{\frac p3} \right\rfloor > D\left(\frac p3 -1\right)  > D \left(\frac RD+1-1\right) =R.$$
% In all cases, we get $\ell(\widetilde \gamma)>R$, so we proved \eqref{item:systole}.

Finally, we prove \eqref{item:arcs}.
Fix a lift $\widetilde H$ of $H$ to $M$, which exists by \eqref{item:lift}.
Let $\widetilde \gamma$ be a geodesic arc in $\partial M$ intersecting $\widetilde H$ only in its endpoints.  Let $\epsilon>0$ and define $\widetilde\gamma_\epsilon$ to be the geodesic arc $\widetilde\gamma$ extended by $\epsilon/2$ at each of its endpoints.  Let $\gamma_\epsilon = p_M(\widetilde\gamma_\epsilon)$ be the image, which lies in some component $S\subseteq \partial H$.  For $\epsilon$ sufficiently small, the Coxeter words of $\gamma_\epsilon$ do not depend on $\epsilon$.

If $\cox{\gamma_\epsilon}$ is not reduced, Lemma~\ref{lem:unreduced word} implies that $\ell(\gamma_\epsilon)>R$.  Since $\ell(\gamma_\epsilon) = \ell(\widetilde\gamma) + \epsilon$, and $\epsilon$ can be as small as we like,
we deduce $\ell(\widetilde\gamma)=\ell(\gamma)\ge R$.

Suppose then that $\cox{\gamma_\epsilon}$ is reduced.
Let $\widetilde \alpha:J\to \partial M \cap \widetilde H$ be an arc connecting the endpoints of $\widetilde \gamma$, chosen so that $\widetilde\alpha(J)$ meets the frontier of $\widetilde H\cap \partial M$ in $\partial M$ only in its endpoints and so that $\widetilde\alpha$ and $\widetilde\gamma$ fit together into a smooth orbifold loop.  We parametrize this loop so the basepoint is in $\widetilde H\cap \partial M$, and not on the frontier of $\widetilde H\cap \partial M$ in $\partial M$.  The projection $\beta = p_M\circ\widetilde\beta$ satisfies $\cox\beta = \cox{\gamma_\epsilon}$ for small $\epsilon$.
Arguing as in the reduced case above, 
$\cox{\beta}$ represents a non-trivial  element $g\in K$, so $\wlength g >3 \frac RD +3$. 
Hence by Lemma~\ref{lem:reduced word} we get $\ell(\widetilde \gamma_\epsilon) =  \ell(\gamma_\epsilon)>R$.  Letting $\epsilon$ tend to zero, we deduce $\ell(\gamma)\ge R$.
\end{proof}

%%%%%%%%%%%%%%%%%%%%%%%%%%%

\section{Proofs of the main results}\label{sec:proofs}

We are now ready to collect all the tools and prove the main results stated in the introduction.
Recall that  in Choice of constants~\ref{constants} we chose $\mu$ to be the distance in the prism $B$ from $F_B$ to the union of the two faces of $B$ disjoint from it. Then we chose $b<\mu$ and $R> 2\pi/\sinh(b)$.

\begin{proof}[Proof of Theorem~\ref{thm:main}]
Let $p_M:M\to H$ be the covering space constructed in Proposition~\ref{prop:manifold}. 
We remind the reader that $n$ was fixed in Assumption~\ref{choice}, and the orbifold $H$ as well as the manifold $M$ depend on $n$.
We have suppressed the dependence on $n$ from the notation for convenience; changing $n$ provides the sequence in the statement.

By part~\eqref{item:systole} of Proposition~\ref{prop:manifold} the injectivity radius of $\partial M$ is at least $R/2$, and  recall that $R$ was chosen so that $R/2> \pi/\sinh(b)$. 
Since the inequality is strict, there is a positive number $c>\pi/\sinh(b)$ so that 
\[ \injrad(\partial M) > \frac{R}{2} > c >\frac{\pi}{\sinh(b)}.\]

We claim that $b < \bw{M}{\partial M}$.
Indeed, let $\gamma\from I\to M$ be a geodesic realizing the buffer width of $\partial M$ in $M$.  Since $\gamma$ must be orthogonal to $\partial M$ at its endpoints, the orbifold geodesic $p_M\circ p\circ p_0\circ\gamma :I\to B$ is orthogonal to the face $F_B$ at its endpoints.  In order to return to $F_B$ it must hit some mirrors disjoint from $F_B$, since all the faces incident to $F_B$ are orthogonal to it.  In particular its total length must be at least twice the distance $\mu$ from $F_B$ to the opposite faces.
It follows that $\mu\leq \bw{M}{\partial M}$.
% In fact it is equal...
But $b<\mu$, so the claim is proven.

We may therefore apply
Theorem~\ref{thm:KM1} 
(\cite[Theorem A]{KM}) 
to obtain a negatively curved metric $\hat d$ on $\coneoff M$.  
This proves \eqref{item:negative}.

We let $H'$ be a lift of $H$ to $M$, whose existence is guaranteed by part~\eqref{item:lift} of Proposition~\ref{prop:manifold}.
Taking any $b'\in (b,\mu)$, we see that the set $H'$ is a tame product near $N_b(\partial M)$ in the sense of Definition~\ref{def:tame}.
Moreover, $H'$ is clearly closed and locally convex in $M$.  Proposition~\ref{prop:manifold}, part~\eqref{item:arcs}  gives 
\[ \bw{\partial M}{H'\cap\partial M} \ge \frac{R}{2}>c, \]
so the
set $H'$ satisfies all the conditions of Theorem~\ref{thm:KM2} (\cite[Theorem B]{KM}).
In particular, $\coneoff{H'}$ is isotopic to a locally convex, and hence $\pi_1$-injective, subset of $\coneoff M$.  
But $\pi_1 (\coneoff{H'}) \cong \pi_1 (\coneoff T_n)$ has property (T) by Lemma~\ref{lem:turnover (T)}.  This establishes item~\eqref{item:T} of the theorem.

Finally, to prove \eqref{item:char}, note that the systole of $\partial M$ goes to infinity with $n$.  
On the other hand, the systole of a closed hyperbolic surface is bounded by a function of the genus (see \cite{MU71}), so the genus of the boundary components must be going to infinity as well.
Moreover, the Euler characteristic of $\coneoff M_n$ is the sum of the genera of the links of the singular points.
\end{proof}

The statements in Corollary~\ref{cor:main} follow from the existence of an infinite subgroup with property (T) via standard arguments. 
We include a proof for the reader's convenience.
Note that in our context property (T) is equivalent to property (FH) by  the Delorme--Guichardet Theorem, see \cite[Theorem 2.12.4]{BHV08}.

\begin{proof}[Proof of Corollary~\ref{cor:main}]
Let $K$ be the infinite subgroup of $\pi_1(\widehat{M}_n)$ with property (T).
A group with property (T) has a global fixed point whenever it acts cubically on
a (not necessarily finite dimensional) $\CAT 0$ cube complex.
(Special cases of this fact were proved first by
Niblo--Reeves~\cite{NR97} and
Niblo--Roller~\cite{NR98}; the statement
in full generality is proved by Cornulier in~\cite{CO13}.)  
In
particular if $\pi_1(\widehat{M}_n)$ acts on a $\CAT 0$ cube complex,
then $K$ must fix a point, and so the action is not proper.  This
establishes conclusion~\eqref{item:notcubulable}.

    Similarly, since $K$ has property (FH),  \cite[Corollary 2.7.3]{BHV08} implies that any  action of $K$ by isometries on a real or complex hyperbolic space has a global fixed point, which proves \eqref{item:notHaagerup}.

    Finally, to prove \eqref{item:notRFRS} assume by contradiction that $\pi_1(\coneoff M_n)$ has a RFRS subgroup of finite index $G'$.
    Then $K'=K\cap G'$ 
    is a finite index subgroup of $K$, hence inherits  property (T); see \cite{BHV08}. Moreover, $K'$ is RFRS because it is a subgroup of $G'$, so $K'$ surjects $\zz$; see \cite{AG08}.
    But $K'$ has property (T), so its abelianization is finite (see \cite[Corollary 1.3.6]{BHV08}), which leads to a contradiction.
\end{proof}
% is a RFRS group with property (T); see \cite{BHV08}.
% Let $\{\Gamma_j\}$ be a RFRS tower for $K'$.
    % (i.e., $\Gamma_j=H'$, $\cap \Gamma_j = \{1\}$, $[H':\Gamma_j]<\infty$, and $\ker ( \Gamma_j \to \Gamma_j^{ab}\otimes \qq) \leq \Gamma_{j+1}\leq \Gamma_j$
    % , hence its rational abelianization 
    % % $\rightQ{H'}{[H',H']}\otimes \qq$ 
    % is trivial. It follows by induction that $\Gamma_j=K'$ for all $j$, contradicting the fact that $\cap \Gamma_j = \{1\}$.

Next we prove the claim in 
Remark~\ref{rmk:T}.
\begin{proposition}\label{prop:split}
The $3$-pseudomanifolds $\coneoff M_n$ constructed in Theorem~\ref{thm:main} contain embedded locally convex and separating surfaces.
In particular, for large enough $n$, the fundamental group $\pi_1(\coneoff M_n)$
splits.
\end{proposition}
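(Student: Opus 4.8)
The plan is to realize the separating surface as one of the totally geodesic walls of the hyperbolic manifold $M$, chosen to lie far enough from $\partial M$ to be untouched by the cone-off construction.

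\textbf{The surface.}
Recall from Choice of constants~\ref{constants} that $b<\mu$, where $\mu$ is the distance in the prism $B$ between $F_B$ and the union of the two mirrored faces of $B$ disjoint from $F_B$. The mirrored face $F_\Sigma$ of $B$ (the image of the octagonal faces in $Y_0$) is disjoint from $F_B$, since the octagons lie in the interior of the pairs of pants; hence it is one of these two faces. Fix an octagonal mirror of $H$ lying over $F_\Sigma$; under the map $f\from\piorb H\to W$ of \S\ref{sec:orbifold_coxeter} it is one of the generators $t$ of the right-angled Coxeter group $W$. Let $\Sigma\subset M$ be the preimage of this mirror under the orbifold covering $M\to H$. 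Because every edge of $F_\Sigma$ is shared with another mirrored face of $B$, the surface $\Sigma$ is a \emph{closed} totally geodesic surface contained in the interior of $M$. It is \emph{embedded}: distinct components of the preimage of the mirror in the orbifold universal cover of $H$ cannot cross, since the product of the reflections across two crossing components would be a nontrivial finite-order element, whose image under $f$ is a product of two conjugates of $t$, hence lies in the torsion-free group $[W,W]$ (every element of finite order in a right-angled Coxeter group has nonzero image in the abelianization), hence is trivial, so the element itself lies in the torsion-free group $\ker f$ and is trivial, a contradiction. Finally, since coverings are local isometries, $\mathrm{dist}_M(\Sigma,\partial M)=\mathrm{dist}_B(F_\Sigma,F_B)\ge\mu>b$, so $\Sigma\subset M\smallsetminus N_b(\partial M)$, where by Theorem~\ref{thm:KM1} the metric $\hat d$ of $\coneoff M$ is (isometric to) the hyperbolic metric. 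Thus $\Sigma$ is totally geodesic, in particular locally convex, in $(\coneoff M,\hat d)$, and therefore $\pi_1(\Sigma)$ embeds as a quasiconvex subgroup of the word-hyperbolic group $\pi_1(\coneoff M)$.

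\textbf{Separation and the splitting.}
When choosing the finite-index torsion-free normal subgroup $K\lhd W$ in Proposition~\ref{prop:manifold} we may also require $K\le[W,W]$; this is compatible with the finitely many conditions imposed there. Then for every generator $s$ of $W$ the subgroup $K$ lies in the kernel of the homomorphism $W\to\zz/2\zz$ counting occurrences of $s$ modulo $2$, so every loop of $M$ meets $\Sigma$ with even mod-$2$ intersection number; hence $\Sigma$ is null-homologous mod $2$ and separates $M=M_1\cup_\Sigma M_2$. As $\Sigma$ is disjoint from $\partial M$, every cone of $\coneoff M$ is attached entirely to one of the $M_i$, so $\Sigma$ separates $\coneoff M$ into two compact aspherical $3$-pseudomanifolds $N_1,N_2$, each with boundary $\Sigma$ (the cone points are interior singularities, far from $\Sigma$). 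The lift $H'\subset M$ of the handlebody $H$ (from the proof of Theorem~\ref{thm:main}) meets $\Sigma$ only along the single face $F_\Sigma\subset\partial H'$, so, $H'$ being connected, $\coneoff{H'}$ lies on one side; say $\coneoff{H'}\subseteq N_1$. Cutting along $\Sigma$ exhibits $\pi_1(\coneoff M)$ as the fundamental group of a graph of groups with vertex groups $\pi_1(N_1),\pi_1(N_2)$ and edge groups the closed surface groups $\pi_1(\Sigma_j)$, one per component of $\Sigma$.

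\textbf{Nontriviality.}
If $\Sigma$ is disconnected, the underlying graph has positive first Betti number and the splitting is nontrivial; so assume $\Sigma$ is connected, giving $\pi_1(\coneoff M)=\pi_1(N_1)*_{\pi_1(\Sigma)}\pi_1(N_2)$, and check that $\pi_1(\Sigma)$ is properly contained in each factor. It is not a sphere group, since $\pi_1(\coneoff M_n)$ is one-ended (its Gromov boundary is the connected Pontryagin sphere, Remark~\ref{rmk:boundary}) and hence not a nontrivial free product. It differs from $\pi_1(N_1)$, because $N_1\supseteq\coneoff{H'}$ and a closed surface group, having the Haagerup property, cannot contain the infinite property (T) group $\pi_1(\coneoff{H'})\cong\pi_1(\coneoff T_n)$ of Theorem~\ref{thm:main}. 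It differs from $\pi_1(N_2)$: if $\Sigma=\partial N_2\hookrightarrow N_2$ induced an isomorphism on $\pi_1$, then, $N_2$ being aspherical, it would induce an isomorphism on $H_*(-;\zz/2\zz)$; but $H_3(N_2;\zz/2\zz)=0$ because a surface group has cohomological dimension $2$, while $H_3(N_2,\partial N_2;\zz/2\zz)\cong\zz/2\zz$ because $N_2$ is a compact $3$-pseudomanifold with boundary, and the homology exact sequence of $(N_2,\partial N_2)$ then forces $H_2(\Sigma;\zz/2\zz)\to H_2(N_2;\zz/2\zz)$ to be zero, contradicting that it is an isomorphism. Hence $\pi_1(\coneoff M_n)$ splits over the quasiconvex surface subgroup $\pi_1(\Sigma)$ (or, in general, over the $\pi_1(\Sigma_j)$), and the $\Sigma_j$ are the asserted embedded locally convex separating surfaces.

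\textbf{The main obstacle.}
The choice of $\Sigma$ is engineered precisely to avoid the point one would expect to be hard, namely the behaviour of the surface under the cone-off surgery: keeping $\Sigma$ at distance $>b$ from $\partial M$ makes it automatically totally geodesic for $\hat d$, so no buffer-width estimate in the spirit of \S\ref{sec:covers} is needed. The genuinely delicate step is instead the nontriviality of the splitting, where one must separately exclude a sphere (using one-endedness of $\pi_1(\coneoff M)$, itself a consequence of the boundary being the Pontryagin sphere) and the two degenerate collapses (using the property (T)/Haagerup dichotomy on the handlebody side and the homological computation above on the other).
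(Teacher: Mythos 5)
Your proof is correct and follows essentially the same route as the paper's: take the preimage $\Sigma$ in $M$ of an octagonal mirror lying over $F_\Sigma$, observe that it is a closed totally geodesic surface at distance at least $\mu>b$ from $\partial M$, so that Theorem~\ref{thm:KM1} leaves the metric untouched near $\Sigma$ and it stays locally convex in $(\coneoff M,\hat d)$, whence the splitting. You go further than the paper in two respects worth noting: the paper only argues that $\Sigma$ is \emph{locally} separating and does not address nontriviality of the splitting, whereas you arrange global separation by shrinking $K$ into $[W,W]$ (harmless, since $[W,W]$ has finite index and all conditions in Proposition~\ref{prop:manifold} survive intersection) and you verify that neither vertex group collapses onto the edge group --- both additions are sound and arguably needed for the use of Proposition~\ref{prop:split} in Remark~\ref{rmk:T}.
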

\begin{proof}
    Consider one of the octagonal mirrors $F$ in the boundary of $H_0$, see Figure~\ref{fig:patterned_pants}.
    Since the adjacent mirrors meet $F$ orthogonally, we have that a component of the preimage of $F$ is a totally geodesic surface $\Sigma$ in $M$, disjoint from $\partial M$.
    (The surface $\Sigma$ maps into the bottom left square in $B$ labeled $F_\Sigma$, the one disjoint from the edge labeled $k$ and colored in yellow in Figure~\ref{fig:prism}.)
    Note that the distance of $\Sigma$ from $\partial M$ is larger than  $b$, so the metric of $\coneoff M$ is isometric to that of $M$ in a neighborhood of $\Sigma$ by Theorem~\ref{thm:KM1}.
    In particular, $\Sigma$ remains locally convex and locally separating in $\coneoff M$.
    It follows that $\pi_1(\coneoff M)$ splits over $\pi_1(\Sigma)$.
\end{proof}

Finally, we prove the claim in Remark~\ref{rem:agol}.
To state and prove it, we must use some notions from the theory of relatively hyperbolic groups~\cite{Gromov87,Farb98,Bow99}.  A quick introduction to most of the language as we use it can be found in~\cite[Section 2.2]{MW20}.
See~\cite{EG20} for the definition of a \emph{relatively geometric action}.
If $B$ is the mirrored polyhedron from Subsection~\ref{sec:hyperbolization}, we note that the pair $(\piorb B, \{\piorb{\partial B}\})$ is relatively hyperbolic, with Bowditch boundary homeomorphic to a $2$--sphere.  Any finite index subgroup $\Gamma$ of $\piorb B$ (for example the fundamental group of one of our manifolds $M_n$) receives an induced  peripheral structure $\mathcal P$ so that $(\Gamma,\mathcal P)$ is relatively hyperbolic and so the inclusion $\Gamma\to \piorb B$ induces an identification of Bowditch boundaries.  The peripheral groups $P\in \mathcal P$ are all commensurable to $\piorb{\partial B}$.
  We write $(\Gamma,\mathcal{P})<(\piorb B,\{\piorb{\partial B}\})$ for this relationship.
\begin{proposition}\label{prop:agol}
    Let $(\Gamma,\mathcal P)<(\piorb B,\{\piorb{\partial B}\})$ be finite index.
    Let $X$ be a $\CAT 0$ cube complex, and let $(\Gamma,\mathcal P) \acts X$ be a relatively geometric action.
    Then there exist $P\in \mathcal P$ and a hyperplane stabilizer $H$  such that $H\cap P$ is infinite.
 \end{proposition}
\begin{proof}
% 1) start with Gamma and a rel geom cubulation X
% 2) induce a rel geom cubulation Y for piorb(B)
% 3) find a deep subgroup Mn inside piorb(B) that fits our main theorem and has large minimum translation length on all links in the cube complex (so that ***)
% 4) Mn acts rel geom on Y (restrict action of piorb(B)
% 5) coneoff(Mn) acts properly on a suitable *** quotient of Y. Contradiction.
    By contradiction, suppose there is a relatively geometric action by $\Gamma$ on a cube complex $X$ so that hyperplane stabilizers intersect parabolic subgroups only in finite subgroups.  

    The hyperplane stabilizers suffice to separate points at infinity of the Bowditch boundary of $(\Gamma,\mathcal P)$ by~\cite[Theorem 1.4]{EGN24}.
    Their images under the inclusion of $\Gamma\to \piorb B$ therefore separate points at infinity of the Bowditch boundary of $(\piorb B,\{\piorb{\partial B}\})$, which we have noted is the same as that of $(\Gamma,\mathcal P)$.
    The result \cite[Theorem 1.3]{EKN} then produces a relatively geometric action $\piorb B\acts Y$ with hyperplane stabilizers commensurable to those of $\Gamma\acts X$.
    (Note that here we need not refine the peripheral structure, since $\piorb{\partial B}$ is one-ended.)
    Because the hyperplane stabilizers in $\piorb B\acts Y$ are commensurable to the hyperplane stabilizers in $\Gamma\acts X$, they have finite intersection with conjugates of $\piorb{\partial B}$.
    
    Let $v$ be a vertex of $Y$ with infinite stabilizer, and let $e$ be an edge incident to $v$.  Since $\Stab{e}$ is (up to index $2$) equal to the intersection of $\Stab{v}$ with the stabilizer of the hyperplane dual to $e$, we deduce $\Stab{e}$ is finite.  
    In particular the action of $\Stab{v}$ on $\lk{v,Y}$ is proper.  Since the action is relatively geometric, it is cocompact, so $\Stab{v}$ also acts cocompactly on $\lk{v,Y}$.  
    It follows that there are only finitely many conjugacy classes of elements of $\Stab{v}$ which move a vertex of $\lk {v,Y}$ a combinatorial distance $3$ or less.  Any element of $\Stab{v}$ is an element of $\piorb B$ so it acts on $\hh^3$, with some translation length.  Let $u$ be the maximum of the (finitely many) translation lengths in $\hh^3$ of elements of $\piorb B$ which stabilize some vertex $v$ and move some vertex of $\lk{ v,Y}$ a distance $3$ or less.  

    We fix $n$ large enough so that Theorem~\ref{thm:main} applies and so that moreover the systole of every boundary component of $M_n$ is strictly larger than $u$.
    We can restrict the action $\piorb B \acts Y$ to the  finite index subgroup  $\pi_1(M_n)$.
    The restricted action is still relatively geometric (with respect to the induced peripheral structure), and in particular cocompact.
    By the choice of $n$,  the following condition on the action $\pi_1(M_n)\acts Y$ holds:
    \begin{equation}\label{eq:dagger}\tag{$\dagger$}
     \parbox{0.85\linewidth}{
        For every vertex $v$ of $Y$, every element of $\Stab{v}$ acts on $\lk {v ,Y}$ by moving every vertex a combinatorial distance of at least 4.}
    \end{equation}
     
Let $K$ be the normal subgroup of $\pi_1(M_n)$ generated by the fundamental groups of the boundary components (i.e., the peripheral subgroups); we have $\pi_1(\coneoff M_n) =\rightQ{ \pi_1(M_n)}{K}$.  We have an induced action of $\pi_1(\coneoff M_n)$ on $Z = \leftQ{Y}{K}$.  This action is cocompact since the action $\pi_1(M_n)\acts Y$ was cocompact. 
Since the action of $\pi_1(M_n)$ on $Y$ is relatively geometric, vertex stabilizers have finite index in a maximal parabolic, so vertex stabilizers of $\pi_1(\coneoff M_n)$ in $Z$ are trivial (note that $\pi_1(\coneoff M_n)$ is torsion-free).  
We finally claim that $Z$ is a $\CAT 0$ cube complex.  This will imply that $\pi_1(\coneoff M_n)$ is cubulated, a contradiction.

To see that $Z$ is a $\CAT 0$ cube complex, we must check  that $Z$ is  a simply connected cube complex and satisfies Gromov's link condition (that the link of every vertex is flag).  That $Z$ is a simply connected cube complex follows from~\cite[Theorem 4.1 and Proposition 4.3]{GMimproper}.  Let $\overline{v}$ be a vertex of $Z$ which is the image of a vertex $v$ in $Y$.  Since $Y$ is $\CAT 0$, 
$\lk{v,X}$ is flag.
The stabilizer of $v$ in $\pi_1(M_n)$ acts freely on this link, and by \eqref{eq:dagger} it moves every vertex a combinatorial distance of at least $4$.  Thus the quotient $\lk{\overline{v},Z}$
 is also flag.
\end{proof}

%%%%%%
\section{Questions}\label{sec:questions}
We showed in Corollary~\ref{cor:main} that our $3$-pseudomanifold groups are not virtually RFRS.  We ask if they are residually finite.  
(It is a well-known question whether \emph{every} word hyperbolic group is residually finite.)
Note that the groups with property (T) we use in this paper (see \S\ref{sec:turnover groups}) are finite index subgroups of some of the groups constructed in \cite{LMW19}, and even those are not known to be residually finite.

\begin{question}
    Are the groups $\pi_1(\coneoff M_n)$ constructed in this paper residually finite?  Or is there a variant of our construction which preserves non-cubulabi\-li\-ty but gives residually finite groups?
\end{question}

By \eqref{item:char} in Theorem~\ref{thm:main}, as $n$ tends to infinity, the genus of the links of the singularities in $\coneoff M_n$ also tends to infinity.  
In particular, for distinct $m$ and $n$, the spaces $\coneoff M_n$ and $\coneoff M_m$ will generally not have a common finite sheeted cover. 
This at least suggests that the groups we produce lie in infinitely many commensurability classes.

However, the genera of the vertex links may not be detected by the fundamental group.
Indeed, in any $3$-pseudomanifold, a singular vertex with link of genus $\ge 2$ can be split into multiple singular vertices with links of lower genera; conversely two singular vertices can be merged by collapsing an edge-path connecting them.
Two $3$-pseudomanifold related by these operations are  homotopy equivalent but not homeomorphic.
It is unclear to us whether these surgeries can be performed while preserving negative curvature, but we suspect it is not always possible.

\begin{question}
    Does our construction give infinitely many commensurability classes of groups?  Quasi-isometry classes?
\end{question}
For all our examples, the Gromov boundary of $\pi_1(\coneoff{M_n})$ is a Pontryagin sphere.  It is shown in~\cite{FGLS25,CDSS25} that there are infinitely many quasi-isometry classes of word hyperbolic groups with Pontryagin sphere boundary.

Our examples have singularities of some astronomically large (though in principle computable) genus.  If the surgeries we mention above cannot be done in a negative curvature preserving manner, the following question has a chance of a positive answer.
\begin{question}
   Are negatively curved $3$-pseudomanifold groups cubulable, if the singularities are restricted to have low enough positive genus?  
\end{question}

Finally, we ask whether cubulability can be ensured if the metric is sufficiently nice, say polyhedral, i.e., piecewise hyperbolic.  Note that the metric we use from \cite{KM} is not polyhedral.
\begin{question}
    Is there a variant of our construction using polyhedral metrics?
\end{question}

By Remark~\ref{rmk:T}  the groups constructed in this paper do not have property (T), and  by \cite{FU99} an infinite $3$-manifold group never has property (T).

\begin{question}
    Can a negatively curved (or more generally, aspherical) closed $3$-pseudomanifold have fundamental group with property (T)?
\end{question}

%\printbibliography

% \bibliographystyle{abbrv}  
\bibliographystyle{shortalpha}  
\bibliography{biblio.bib}

\end{document}